\title{Embedding large graphs into a random graph}
\author{ Asaf Ferber
\thanks{Department of Mathematics, Yale University, and Department of Mathematics, MIT. Emails:
asaf.ferber@yale.edu, and ferbera@mit.edu.}\and Kyle Luh \thanks {Department of Mathematics, Yale University. Email: kyle.luh@yale.edu.}
 \and
Oanh Nguyen \thanks{Department of Mathematics, Yale University. Email: oanh.nguyen@yale.edu.}
}
\date{\today}
\theoremstyle{plain}
\newtheorem{theorem}{Theorem}[section]
\newtheorem{lemma}[theorem]{Lemma}
\newtheorem{conjecture}[theorem]{Conjecture}
\newtheorem{problem}[theorem]{Problem}
\newtheorem{remark}[theorem]{Remark}
\newtheorem{definition}[theorem]{Definition}
\newcommand{\eps}{\varepsilon}
\renewcommand{\P}{\mathbb P}
\newcommand{\indeg}{\text{indeg}}
\title{Embedding Large Graphs into a Random Graph}% end with percent
\begin{document}
\maketitle

\begin{abstract}
In this paper we consider the problem of embedding almost-spanning, bounded degree graphs in a random graph. In particular, let $\Delta\geq 5$, $\varepsilon > 0$ and let $H$ be a graph on $(1-\varepsilon)n$ vertices and with maximum degree $\Delta$. We show that a random graph $G_{n,p}$ with high probability contains a copy of $H$, provided that $p\gg (n^{-1}\log^{1/\Delta}n)^{2/(\Delta+1)}$. Our assumption on $p$ is optimal up to the $polylog$ factor.  We note that this $polylog$ term matches the conjectured threshold for the spanning case.
\end{abstract}

\section{Introduction}

Ever since its introduction by Erd{\H
o}s and R{\'e}nyi in 1960 ~\cite{ErdosRenyi}, the  random graph model has been one of the main objects of
study in probabilistic combinatorics. Given a positive integer $n$
and a real number $p \in [0,1]$, the binomial random graph $G_{n,p}$ is a random variable taking values in the set
of all labeled graphs on the vertex set $[n]=\{1, 2, \dots, n\}$. We can describe the
probability distribution of $G_{n,p}$ by saying that each pair of
elements of $[n]$ forms an edge in $G_{n,p}$ independently, with
probability $p$. Usually, the questions considered in this model
have the following generic form: given some graph property ${\cal
P}$ and a function $p = p(n) \in [0,1]$, determine if $G_{n,p}$
satisfies ${\cal P}$ {\em with high probability} (whp), i.e., if the
probability that $G_{n,p}$ satisfies ${\cal P}$ tends to $1$ as $n$
tends to infinity.

The core meta-problem in the area is the study of
the {\em evolution of $G_{n,p}$}, i.e., analyzing how $G_{n,p}$
behaves with respect to certain graph properties as $p$ traverses
the interval $[0,1]$. This task is inherently connected with the
problem of finding \emph{thresholds} guaranteeing the asymptotically almost sure
appearance (or the asymptotically almost sure non-appearance) of certain subgraphs in
$G_{n,p}$.

A graph property is said to be monotone increasing if whenever a graph $G$ satisfies it, every graph $G'\supseteq G$ satisfies it as well. For a monotone increasing property $\mathcal P_n$ (for example, ``$G_{n,p}$ contains a fixed graph $H$"), a function $q(n)$ is a threshold function for $\mathcal P_n$ if and only if

\[ \P\left[G_{n,p} \text{ satisfies } \mathcal P_n\right]\rightarrow \left\{ \begin{array}{cc}
       1&  \text{if } p(n)/q(n)=\omega(1)   \\
       0&  \text{ if } p(n)/q(n)=o(1).\end{array} \right. \]

A classical result for random graphs is that a threshold for $G_{n,p}$ to
contain a given (fixed) graph $H$ as a subgraph, is
$n^{-1/m(H)}$, where $$m(H) = \max\{|E(H')|/|V (H')| \mid H'\subseteq H, H'\neq \emptyset\}.$$
The case where $H$ is ``balanced" (that is, $m(H)=|E(H)|/|V(H)|$) had been proven in \cite{ErdosRenyi}, and for a general $H$, one can find a proof in \cite{bollobas1998random}, pages 257-274.

Alternatively, we can consider the least $p(n)$ such that for each $H'\subseteq H$, the expected number of copies of $H'$ in $G_{n,p}$ is at least one. This particular function has been named as the \emph{expectation threshold} by Kahn and Kalai in \cite{KK}.
Following \cite{KK}, note that it also makes sense if, instead of a fixed $H$,
we consider a sequence $\{H_n\}_n$, of graphs with $|V (H_n)| = n$. Formally,
for an arbitrary $H$, Kahn and Kalai defined $p_E(H)$ to be the least $p$ such that, for every spanning $H'\subseteq H$, $(|V (H')|!/|Aut(H')|)p^{|E(H')|}\geq 1$. Then, for a fixed graph $H$, the expectation threshold is the same as $p_E(H_n)$ if we consider $H_n$ as $H$ plus $n-|V(H)|$ isolated vertices.

For the class of large graphs (that is, whenever the number of vertices of $H$ is allowed to grow with $n$), one cannot expect $p_E(H)$ to always capture the true threshold behavior. For example, suppose that $H_n$ are Hamilton cycles (that is, a cycle on $n$ vertices). In this case, on one hand, the expected number of Hamilton cycles in $G_{n,p}$ is clearly $\mu_n=\frac{(n-1)!}{2}p^n$, and therefore the expectation threshold is of order $1/n$. On the other hand, it is well known \cite{posa}, that a threshold function for $H_n$ is of order $\log n/n$ (in fact, more precise results are known. The interested reader is referred to \cite{bollobas1998random} and its relevant references). A similar phenomena holds for perfect matchings (that is, a collection of $\lfloor n/2\rfloor$ pairwise disjoint edges), as was proven in \cite{ErdosRenyi}.

These examples led Kahn and Kalai to the beautiful conjecture that in fact, one cannot lose more than a $\Theta(\log |V(H)|)$ factor in $p_E(H_n)$. Specifically, they conjectured the following (Conjecture 2. in \cite{KK}).

\begin{conjecture}\label{kahn-kalai} For any $\{H_n\}_n$, a threshold function for the property ``$G_{n,p}$ contains $H_n$ " is $O(p_E(H_n)\log |V (H_n)|)$.
\end{conjecture}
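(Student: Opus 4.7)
The final statement is the Kahn--Kalai conjecture, which is a central open problem in probabilistic combinatorics rather than a result of this paper; accordingly my plan here is to describe the most natural attack and diagnose where it breaks down, not to claim a proof. The first and most naive attempt would be the second moment method: by definition of $p_E(H_n)$, setting $p=C\cdot p_E(H_n)\log|V(H_n)|$ makes the expected number $X$ of labeled copies of $H_n$ in $G_{n,p}$ diverge, so Chebyshev would give $\P(X>0)\to 1$ as long as $\E[X^2]=(1+o(1))(\E[X])^2$.

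Already here the plan stalls. The variance of $X$ is dominated by pairs of copies sharing some dense subgraph $H'\subseteq H_n$, and the very reason one needs the polylog factor above $p_E$ in the first place is to suppress these rare-but-heavy overlaps; consequently the second moment ratio is typically much larger than $1$, and a direct moment calculation cannot yield better than a polynomial loss over $p_E$ rather than the conjectured logarithmic one.

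The more realistic plan is a two-round exposure. First, I would expose $G_{n,p_1}$ with $p_1\asymp p_E(H_n)$ to build a ``skeleton'' embedding of nearly all of $H_n$ using the expected-count hypothesis, and then sprinkle $G_{n,p_2}$ with $p_2/p_1\asymp \log|V(H_n)|$ to complete the embedding, typically via an absorbing structure or rotation--extension argument. This blueprint has been carried out for specific $H_n$ such as perfect matchings, Hamilton cycles, and $F$-factors, each time with bespoke combinatorial input. The main obstacle to attacking the full conjecture in this way is the absence of any uniform mechanism converting the expectation hypothesis into an existence statement for arbitrary $H_n$; such a mechanism would amount to a general fractional-to-integral principle for monotone containment events, and I do not see how to produce one by standard moment or entropy techniques. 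For this reason the only realistic version of the plan is to restrict $H_n$ to a structured class where the sprinkling-plus-absorbing scheme can be executed explicitly, which is exactly what the present paper does for almost-spanning bounded-degree $H$.
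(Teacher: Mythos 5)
You are right that this statement is the Kahn--Kalai conjecture, quoted from \cite{KK}, and the paper offers no proof of it; it is stated only as motivation for the main theorem. Your decision not to claim a proof is the correct one, and your diagnosis of why the naive second moment method fails (the variance is dominated by pairs of copies overlapping in a dense subgraph, which is precisely what the $\log$ factor is meant to suppress) matches the standard understanding; the paper itself only establishes the bounded-degree, almost-spanning special case via the partition-plus-sprinkling scheme you describe. One remark worth adding for context: the conjecture as stated has since been resolved in full generality by Park and Pham (2022), building on the fractional version of Frankston, Kahn, Narayanan and Park, and the proof is neither a second moment argument nor a sprinkling argument but a ``spread''/cover-based counting argument, so the missing ``uniform mechanism converting the expectation hypothesis into an existence statement'' that you identify does in fact exist, though it was not available by the techniques you list.
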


Given a graph $H$ on $h$ vertices, we say that a graph $G$ on $n$ vertices contains an $H$-\emph{factor} if it contains $\lfloor n/h\rfloor$ vertex disjoint copies of $H$. For example, if $H$ is an edge, then an $H$-factor is simply a perfect matching.

Perhaps the most impressive result supporting Conjecture \ref{kahn-kalai} is due to Johansson, Kahn and Vu \cite{johansson2008factors}. In their seminal paper, Johansson, Kahn and Vu determined the threshold behavior of $H$-factors. In particular, they showed that if $H=K_{\Delta+1}$ then
\begin{align}\label{pDelta}
p(\Delta)=\left(n^{-1}\log^{1/\Delta}n\right)^{\frac{2}{\Delta+1}}
\end{align}
is a threshold function for the existence of an $H$-factor. Their results naturally extend to hypergraphs and as a corollary they proved the long standing conjecture, which is known as Shamir's Problem, of determining the threshold behavior of perfect matchings in random hypergraphs (this result garnered them the prestigious Fulkerson Prize!).
As their proof relies heavily on the facts that the connected components are small and that an $H$-factor has a lot of symmetries, it is natural to consider the following, more general, problem.

\begin{problem} Suppose that $H$ is an arbitrary graph on $n$ vertices with maximum degree $\Delta$. What is the threshold behavior of the property ``contains $H$"?
\end{problem}

Let us denote by $\mathcal H(n,\Delta)$ the family of all such graphs. A common belief is that the ``worst case threshold" for $H\in \mathcal H(n,\Delta)$ is attained when $H$ is a $K_{\Delta+1}$-factor, as this is the the most ``locally dense" graph in this family. It is thus natural to conjecture the following.

\begin{conjecture}\label{bounded}
  Let $\Delta\in \mathbb{N}$, $H\in \mathcal H(n,\Delta)$, and $p= \omega\left(n^{-1}\log^{1/\Delta}n\right)^{\frac{2}{\Delta+1}}$. Then, whp $G_{n,p}$ contains a copy of $H$.
\end{conjecture}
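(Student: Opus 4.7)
Since the conjecture is the fully spanning statement (the theorem proved in this paper is only its almost-spanning relaxation), my plan is to combine the paper's almost-spanning embedding as a black box with an absorbing framework. The overall architecture is the one that has become standard for spanning embedding problems in random graphs: first isolate a small ``flexible" substructure inside $H$, then reserve an absorbing gadget inside $G_{n,p}$ that can accommodate this substructure in several combinatorially distinct ways, then embed the bulk of $H$ into the remainder, and finally activate the absorber to extend the partial embedding to a full one. What makes the situation here genuinely hard, as opposed to the analogous absorbing arguments at polylogarithmic density, is that we are working at essentially the Johansson-Kahn-Vu threshold $p(\Delta) = (n^{-1}\log^{1/\Delta}n)^{2/(\Delta+1)}$, where the random graph is only just dense enough to contain a single $K_{\Delta+1}$-factor.

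Concretely, I would first choose a set $L \subseteq V(H)$ of size $\varepsilon n$ consisting of vertices that are pairwise at distance at least $3$ in $H$; this is possible since $\Delta$ is fixed, and it ensures that $H - L$ still has maximum degree $\Delta$ and that each $v \in L$ attaches to $H - L$ via an independent set of at most $\Delta$ neighbours whose attachment neighbourhoods in $H - L$ are disjoint. Next I would reserve inside $V(G_{n,p})$ a small random set $R$, and try to build in $G_{n,p}[R]$ (together with its edges to the rest) an \emph{absorber}: a family of gadgets, one for each attachment type appearing in $L$, such that each gadget has two embeddings into $G_{n,p}$ agreeing outside a single vertex. Linking these gadgets through a ``connecting path" or a matching, in the style of R\"odl--Ruci\'nski--Szemer\'edi or more recent spread-based absorbers, would produce a global absorbing structure $A$ with the property that for any $\varepsilon n$ designated vertices there is an embedding of the leftover part of $H$ into $R \cup \{\text{designated}\}$. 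With $A$ in hand, I would then apply the almost-spanning theorem to embed $H - L$ into $G_{n,p} - R$, and activate the absorber on the remaining $\varepsilon n$ vertices.

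The main obstacle, which I expect to dominate the proof, is the construction of the absorber at this density. At $p(\Delta)$ the expected number of copies of $K_{\Delta+1}$ through any fixed vertex is only polylogarithmic, so bounded-size absorbing gadgets essentially never exist and must be assembled from longer chains of locally dense configurations with carefully balanced densities (all subgraphs of the gadget should have density at most $(\Delta+1)/2$, otherwise they themselves would not exist in $G_{n,p}$). For the $H$-factor case this is precisely the difficulty that Johansson--Kahn--Vu bypass using the perfect symmetry of a factor and a global fractional/entropy argument; for an arbitrary $H \in \mathcal H(n,\Delta)$ no such symmetry is available, and each local attachment pattern in $L$ demands its own absorbing gadget. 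I would therefore expect to need both a bespoke family of gadgets adapted to the local neighbourhood types of $H$ and a spreadness argument in the spirit of Frankston--Kahn--Narayanan--Park to certify their existence whp, and I would not be surprised if closing the spanning case in full generality ultimately requires a genuinely new idea beyond the absorbing/almost-spanning scheme outlined here.
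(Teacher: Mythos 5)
The statement you are asked about is Conjecture \ref{bounded}, which the paper does \emph{not} prove: it is left open, and the authors only establish the almost-spanning relaxation (Theorem \ref{thm:main}), and even that only for $\Delta\ge 5$. So there is no ``paper proof'' to match; the question is whether your sketch closes the gap, and it does not. What you have written is a plausible programme, not a proof, and the step you defer --- constructing the absorber at density $p(\Delta)=(n^{-1}\log^{1/\Delta}n)^{2/(\Delta+1)}$ --- is the entire content of the conjecture. You acknowledge this yourself in the last sentence, which is candid but means the argument is incomplete exactly where it matters.

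To make the obstruction concrete: the basic absorbing gadget you describe (``two embeddings agreeing outside a single vertex'') cannot exist at this density for a vertex $v\in L$ of degree $\Delta$. Such a gadget would require two distinct vertices $u,u'$ of $G_{n,p}$ both adjacent to the same set $W$ of $\Delta$ already-embedded neighbours; but the expected number of common neighbours of a fixed $\Delta$-set is $np^{\Delta}=n^{1-2\Delta/(\Delta+1)}\mathrm{polylog}(n)=o(1)$ for $\Delta\ge 2$, so whp a fixed $\Delta$-set has no common neighbour at all, let alone two. Any viable absorber must therefore re-embed the neighbours of $v$ as well, and the neighbours of those neighbours, producing the ``longer chains'' you allude to; but then the chain is a subgraph rooted at prescribed vertices, and controlling the existence of such rooted structures whp is precisely the rooted-density problem (every piece must satisfy $e(S)/(v(S)-2)\le(\Delta+1)/2$ relative to its roots) that neither you nor the paper resolves. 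Two further issues: the black box you invoke (the almost-spanning theorem) is only proved in this paper for $\Delta\ge 5$, while the conjecture is stated for all $\Delta\in\mathbb{N}$; and applying it to $G_{n,p}-R$ after reserving $R$ requires the embedding to avoid $R$, i.e.\ a slightly stronger, ``restricted'' version of Theorem \ref{thm:main} that would itself need proof. In short, the architecture is reasonable and standard, but no step beyond quoting the paper's theorem is actually carried out, so this does not constitute a proof of Conjecture \ref{bounded}.
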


\begin{remark} It is worth mentioning the following:
\begin{enumerate}
\item If true, Conjecture \ref{bounded} is optimal for the family $\mathcal H(n,\Delta)$ as it is tight for $K_{\Delta+1}$-factors.
\item It may be the case that for some specific members $H\in \mathcal H(n,\Delta)$, one can do much better. For example, if $H\in \mathcal H(n,2)$ is a Hamilton cycle, as mentioned above, the threshold function for $H$ is roughly $\log n/n$, whereas from \eqref{pDelta} we obtain a bound of roughly $n^{-2/3}$. Another example is the following remarkable recent result due to Montgomery \cite{montgomery2014embedding}. He showed that for every $\Delta=O(1)$ and $H\in \mathcal H(n,\Delta)$ which is a tree, a threshold function for $H$ is of order of magnitude at most $\log^5n/n$.
\end{enumerate}
\end{remark}

In fact, we believe that something stronger is true. Let $\mathcal F$ be a family of graphs.  A graph $G$ is called $\mathcal F$-\emph{universal} if it contains every member of $\mathcal F$. We believe that the following is true.
\begin{conjecture}\label{universalconjecture}
Let $\Delta\in \mathbb{N}$ and $p=\omega \left(n^{-1}\log^{1/\Delta}n\right)^{\frac{2}{\Delta+1}}$. Then, whp $G_{n,p}$ is $\mathcal H(n,\Delta)$-universal.
\end{conjecture}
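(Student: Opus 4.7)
The plan is to reduce the universality statement to the almost-spanning embedding result (the main theorem that this paper will establish) via an absorbing argument. First, I would reserve a random subset $R\subset V(G_{n,p})$ of size $\eps n/2$ and set $A = V\setminus R$. Conditionally on $R$, the induced subgraph $G[A]$ is distributed as $G_{|A|,p}$, so the almost-spanning result produces an embedding of any prescribed $(1-\eps)n$-vertex subgraph $H'\subseteq H$ of maximum degree $\Delta$ into $G[A]$, chosen so that $H\setminus H'$ is a small piece attached to a controlled interface of ``roots'' in $A$. The remaining $\eps n/2$ vertices of $H$ together with their bounded-degree attachments must then be absorbed using the vertices of $R$.

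Second, I would establish a robust absorption property: with high probability $G_{n,p}$ contains, in every linear-sized subset, a flexible reservoir of $K_{\Delta+1}$-copies that can be reconfigured to embed any residual graph of maximum degree $\Delta$ attached to the prescribed interface. The natural route is a Montgomery-style distributive absorber built from small gadgets, each centered on a $K_{\Delta+1}$, that individually absorb one vertex in many ways and are combined via a Hall-type matching. At the density $p = \omega\!\left((n^{-1}\log^{1/\Delta}n)^{2/(\Delta+1)}\right)$, a typical vertex lies in $\omega(\log n)$ disjoint $K_{\Delta+1}$-copies inside $R$, which is exactly the slack the absorbers demand.

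Third, to upgrade from ``contains each fixed $H$'' to ``$\mathcal H(n,\Delta)$-universal,'' I would arrange the embedding algorithm to depend on $H$ only through deterministic structural properties of $G_{n,p}$ (subgraph count lower bounds, neighborhood spread, no small dense sets, and the absorber reservoir above). Since these properties hold whp without any reference to a specific $H$, no union bound over the enormous family $\mathcal H(n,\Delta)$ is needed: on the high-probability event that $G_{n,p}$ is ``sufficiently pseudorandom,'' every $H\in \mathcal H(n,\Delta)$ embeds via the same scheme.

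The main obstacle will be matching the polylog factor throughout this framework. The sharp $\log^{1/\Delta}n$ dependence in \cite{johansson2008factors} stems from a delicate entropy/nibble argument that exploits the symmetries of the $K_{\Delta+1}$-factor; it is not obvious that an arbitrary $H \in \mathcal H(n,\Delta)$ admits a comparable entropy lower bound on its number of partial embeddings into a pseudorandom host. I expect the crux to be a uniform entropy lemma asserting that every $H \in \mathcal H(n,\Delta)$ has at least as many partial embeddings (on the appropriate scale) as a $K_{\Delta+1}$-factor, so that the Johansson--Kahn--Vu machinery can be run with $H$ playing the role of the factor. Establishing such a uniform entropy bound, together with pushing it through the absorption step without losing a $\log$-exponent, is the principal difficulty and is presumably why the conjecture remains open.
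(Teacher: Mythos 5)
The statement you are addressing is Conjecture \ref{universalconjecture}, which the paper explicitly leaves open; the paper proves only the almost-spanning, single-graph case (Theorem \ref{thm:main}, for $\Delta\geq 5$). So there is no proof in the paper to compare against, and your text is a research outline rather than a proof. The outline correctly identifies where the difficulty lies, but the two steps you defer to are precisely the open problems, and one of them fails as stated.

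The concrete gap is in your absorption step. At $p=\omega\left(\left(n^{-1}\log^{1/\Delta}n\right)^{2/(\Delta+1)}\right)$ we have $np^{\Delta}=n^{-(\Delta-1)/(\Delta+1)+o(1)}=o(1)$, so a fixed set of $\Delta$ vertices has in expectation $o(1)$ common neighbors, and whp many $\Delta$-sets of $G_{n,p}$ have none. Consequently no scheme that places a single leftover vertex of $H$ after all $\Delta$ of its $H$-neighbors have been embedded can work: a gadget that ``individually absorbs one vertex'' must find a common neighbor of up to $\Delta$ prescribed images, and typically there is none. This is exactly why the paper embeds entire dense clusters $S$ simultaneously via Janson's inequality rather than vertex by vertex, and why its argument stops at $(1-\varepsilon)n$ vertices: the spare $\varepsilon n$ vertices are what make the relevant first moments $\omega(r\log n)$ in Lemma \ref{lemma:concentration}. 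An absorber for the last $\varepsilon n/2$ vertices would have to place whole dense pieces into a nearly saturated host, which is the spanning problem itself in disguise. Your ``uniform entropy lemma'' is likewise not a lemma you sketch but a restatement of the obstruction; you acknowledge as much. Your third point (deriving universality from deterministic pseudorandomness properties rather than a union bound over $\mathcal H(n,\Delta)$) is the standard and sensible plan, but the paper's key probabilistic step is run for a fixed $H$ (the sets $W_{ij}$ depend on the partial embedding of $H$), and converting it into an $H$-free deterministic property is itself unresolved. In short: the proposal is a reasonable description of why the conjecture is hard, not a proof of it.
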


As the problem of embedding large graphs in $G_{n,p}$ is fundamental, it has been studied quite extensively,  particularly with respect to special graphs such as forests, planar graphs, and Hamilton cycles. Perhaps the first known result for general graphs from $\mathcal H(n,\Delta)$ is due to Alon and F\"uredi \cite{AF} from 1992. They introduced an embedding technique based on matchings, and showed that for a given
graph $H$ on $n$ vertices and with maximum degree $\Delta(H)=\Delta$, a
typical $G_{n,p}$ contains a copy of $H$, provided that
$p=\omega\left(n^{-1/\Delta}(\log n)^{1/\Delta}\right)$. Note that this bound on $p$ is natural, as in this range it is easy to see that whp every subset of size $\Delta$ has common neighbors in $G_{n,p}$. Therefore, this bound
has become a benchmark in the field and much subsequent work on
embedding spanning or almost-spanning structures of maximum degree
$\Delta$ in random graphs achieves this threshold.

Among all the examples we mention the following. Regarding the ``universality problem", Alon, Capalbo, Kohayakawa, R\"odl, Ruci\'nski and Szemer\'edi \cite{ACKRRS} showed that for any $\varepsilon > 0$ and any natural number $\Delta$ there exists a constant $c > 0$ such that the random graph $G_{n,p}$ is whp ~$\mathcal H((1-\varepsilon)n, \Delta)$-universal for $p \geq c (\log n/n)^{1/\Delta}$. This result was improved to $\mathcal H(n,\Delta)$ by Dellamonica, Kohayakawa, R\"odl and Ruci\'nski \cite{dellamonica2008universality} and the bound on $p$ has been slightly improved by Nenadov, Peter, and the first author \cite{FNP} in the case where all graphs are ``not locally dense". Conlon, Nenadov, Skori\'c and the first author \cite{CFNS} managed to improve the edge probability to $p=n^{-1/(\Delta-1)}\log^5n$, for the family $\mathcal H((1-\varepsilon)n,\Delta)$ with $\Delta \geq 3$ and any $\varepsilon > 0$. Recently, the first and second author along with Kronenberg, made the first step towards proving Conjecture \ref{universalconjecture} by settling the $\Delta=2$ case \cite{ferber2016optimal}.  Regarding the ``embedding one graph" problem, it is worth mentioning the very nice result of Riordan \cite{riordan2000spanning}, who managed to prove Conjecture \ref{bounded} up to a factor of $n^{\Theta(1/\Delta^2)}$ using the second moment method.

In this paper we solve Conjecture \ref{bounded} for almost-spanning graphs. That is, we prove the following result.
\begin{theorem}
  \label{thm:main}
  Let $\varepsilon, \delta>0$ be any constants and let $\Delta\geq 5$ be an integer. Then, there exists an $N_0$ such that for all $n\ge N_0$ the following holds.
For every $H\in \mathcal H((1-\varepsilon)n,\Delta)$, with probability at least $1-\delta$ the random graph $G_{n,p}$ contains a copy of $H$, provided that $p=\omega\left(n^{-1}\log^{1/\Delta}n\right)^{2/(\Delta+1)}$.
\end{theorem}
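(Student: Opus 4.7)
The plan is to reduce the embedding of $H$ into $G_{n,p}$ to a two-step structural problem solvable by combining the Johansson-Kahn-Vu (JKV) theorem for $K_{\Delta+1}$-factors with sprinkling and an absorbing argument. The motivation is that $p=\omega((n^{-1}\log^{1/\Delta}n)^{2/(\Delta+1)})$ is exactly the JKV threshold for a $K_{\Delta+1}$-factor, and since $K_{\Delta+1}$ is the densest member of $\mathcal H(n,\Delta)$ while every $H\in\mathcal H(n,\Delta)$ admits an equitable $(\Delta+1)$-coloring by Hajnal-Szemer\'edi, the local structure of $H$ fits inside a $K_{\Delta+1}$. The $\varepsilon n$ slack host vertices provide space to absorb small defects.

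To implement this, first apply Hajnal-Szemer\'edi to obtain a proper $(\Delta+1)$-vertex-coloring $V(H)=U_1\cup\cdots\cup U_{\Delta+1}$ with color classes of near-equal size, making $H$ a $(\Delta+1)$-partite graph. Group $V(H)$ into $\lceil |V(H)|/(\Delta+1) \rceil$ \emph{columns}, each containing one vertex per $U_i$. Sprinkle, writing $G_{n,p}=G_1\cup G_2\cup G_3$ as a union of independent binomial random graphs with densities $p_i=\Theta(p)$, each above the JKV threshold. Choose a random equitable partition $V(G)=W_1\cup\cdots\cup W_{\Delta+1}\cup R$ with $|W_i|\geq|U_i|$ and $|R|=\Theta(\varepsilon n)$, and apply a $(\Delta+1)$-partite variant of JKV inside $G_1[W_1\cup\cdots\cup W_{\Delta+1}]$ to extract a near-perfect packing of $K_{\Delta+1}$-transversals (i.e., $K_{\Delta+1}$'s meeting each $W_i$ in exactly one vertex). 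Each transversal is a potential host for a column of $H$. Then use $G_2$ to certify the inter-column edges of $H$: build an auxiliary bipartite graph between columns and transversals whose edges indicate ``$G_2$-consistent'' placements, and apply Hall's theorem to find a perfect matching. Finally, the $o(n)$ leftover vertices (from the partite JKV residual) together with the reserve $R$ are absorbed by a pre-reserved absorbing structure built in $G_3$.

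The main technical obstacle is the Hall-type matching step. The $K_{\Delta+1}$-packing produced by JKV depends on $G_1$, while the inter-column consistency edges must come from $G_2$; verifying Hall's condition requires controlling the joint distribution between these two (conditionally independent) pieces of randomness. Since every vertex of $H$ has at most $\Delta$ cross-column edges and $p_2$ is well above the JKV threshold, a first-moment calculation shows many consistent placements exist in expectation, but ruling out small ``bottleneck'' subsets where Hall's condition could fail demands a careful second-moment or concentration analysis, exploiting that the columns of $H$ are themselves sparsely connected. A secondary point is extending JKV from the symmetric $K_{\Delta+1}$-factor statement to its equitable $(\Delta+1)$-partite version, which should be a routine modification of the original entropy-based proof but needs to be justified; the hypothesis $\Delta\geq 5$ is likely consumed here to make the partite counting tractable.
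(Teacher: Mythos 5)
Your proposal takes a genuinely different route from the paper, but it has a gap that I believe is fatal at this edge density. The paper does not go through Hajnal--Szemer\'edi or a partite $K_{\Delta+1}$-factor at all: it first extracts from $H$ all minimal ``dense'' pieces $S$ with $e(S)/(v(S)-2)>(\Delta+1)/2$ (there are at most $O(n)$ of them, each on at most $2\Delta+1$ vertices, grouped into constantly many batches of isomorphic, pairwise far-apart pieces), embeds the remaining ``sparse'' part via Riordan's second-moment theorem (which works precisely because $\max_S e(S)/(v(S)-2)\le(\Delta+1)/2$ there), and then embeds each batch of dense pieces by an Aharoni--Haxell hypergraph-matching argument combined with Janson's inequality. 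The decisive counting fact (equation \eqref{cond} in the paper) is that each dense piece $S$ on $s$ vertices sends at most $s$ edges to the rest of $H$, so when a piece is embedded it needs to attach to the already-built embedding by only about one edge per vertex.

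Your scheme violates exactly this constraint. A ``column'' is an arbitrary transversal of the Hajnal--Szemer\'edi color classes, and for many $H\in\mathcal H((1-\varepsilon)n,\Delta)$ (say $H$ a $\Delta$-regular bipartite graph, or a $K_{\Delta,\Delta}$-factor) a column vertex can have all $\Delta$ of its $H$-neighbors in \emph{other} columns. Certifying such a vertex's placement in $G_2$ requires it to lie in the common $G_2$-neighborhood of $\Delta$ already-specified vertices, and at $p=\Theta(n^{-2/(\Delta+1)}\cdot\mathrm{polylog})$ one has $np^{\Delta}=n^{-(\Delta-1)/(\Delta+1)+o(1)}\to0$ (indeed $np^{\Delta-1}\to0$ for $\Delta\ge4$), so whp no such vertex exists. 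Even globally the first moment fails: the number of column-to-transversal assignments is $\exp\left(\tfrac{(1+o(1))n}{\Delta+1}\log n\right)$ while the success probability of a fixed assignment is $p^{E_{\mathrm{inter}}}=\exp\left(-\tfrac{(2+o(1))E_{\mathrm{inter}}}{\Delta+1}\log n\right)$, and for $\Delta$-regular bipartite $H$ one has $E_{\mathrm{inter}}\ge n(\Delta-1)/4>n/2$ for every choice of columns once $\Delta\ge3$, so the expected number of valid assignments is $o(1)$. Two further problems: your auxiliary ``bipartite graph between columns and transversals'' is not well defined, since consistency of a placement of one column depends on where its neighboring columns go (so Hall's theorem for bipartite graphs does not apply; you would need a hypergraph matching statement, which is what the paper uses via Aharoni--Haxell); and both the ``partite JKV'' and the absorbing structure for general bounded-degree graphs at this density are substantial unproved ingredients, not routine modifications. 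The hypothesis $\Delta\ge5$ is in fact consumed in the paper's Janson computation for the dense pieces, not in any factor theorem.
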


Note that for $\Delta=2$ it is relatively simple to obtain this result, and $\Delta=3$ is obtained from \cite{CFNS}. Our proof breaks down for $\Delta=4$, and it looks like a new idea is required for settling this case.
Another thing to be mentioned is that it seems like the $(\log n)^{2/(\Delta(\Delta+1))}$ factor is redundant for the almost-spanning case (for example, it is not needed if one tries to embed $(1-\varepsilon)\frac{n}{\Delta+1}$ copies of $K_{\Delta+1}$ for $\varepsilon>0$) but we have not managed to get rid of it.

{\bf Outline of the proof.} Our proof strategy is relatively simple. Given $\varepsilon >0$ and a graph $H\in \mathcal H((1-\varepsilon))n,\Delta)$, we first partition it into a ``sparse" part $H'$ and a ``dense" part (see Section \ref{sec:par}). The ``sparse" part is embedded using a result of Riordan \cite{riordan2000spanning} which is stated in Section \ref{sec:rior}. Embedding the ``dense" part is basically the heart of our proof. While partitioning $H$, we make sure that the ``dense" part consists of constantly many ``batches" of small graphs to be embedded iteratively. In an approach similar to \cite{CFNS}, in each round of this iterative embedding, we make use of a hypergraph matching theorem of Aharoni and Haxell \cite{aharoni2000hall} (see Theorem \ref{thm:hyper_match}) and Janson's inequality to show that one can extend the current embedding.

\subsection{Notation}
For a graph $G$, $V(G)$ and $E(G)$ denote its vertex set and edge set respectively. The number of vertices is denoted by $v(G) = |V(G)|$ and the number of edges $e(G) = |E(G)|$. For two vertices $x, y\in V(G)$, if $xy\in E(G)$ then we sometimes abbreviate this to $x\sim_G y$ (or $x\sim y$ if there is no risk of confusion). We denote by $\Delta(G)$ the maximum degree of the vertices in $G$.

Given two induced subgraphs $S,S'$ of a graph $G$, we let $dist_G(S,S')$ denote the \emph{distance} between them. That is, the length of the shortest path in $G$ connecting some vertex of $S$ to a vertex of $S'$ (for example, if $V(S)\cap V(S')\neq \emptyset$ then $dist_G(S,S')=0$).  Also, we let $S\cup S'$, $S \cap S'$ and $S \setminus S'$ denote the induced subgraphs in $G$ on $V(S) \cup V(S')$, $V(S) \cap V(S')$ and $V(S) \setminus V(S')$ respectively.

For two sequences $f_n$ and $g_n$ of positive numbers, we say that $f_n = O(g_n)$ or $g_n = \Omega(f_n)$ if there exists a constant $C$ such that $f_n\le C g_n$ for all $n$. We say that $f_n = o(g_n)$ or $g_n=\omega(f_n)$ if $\lim _{n\to \infty} \frac{f_n}{g_n}=0$.

\section{Auxiliary results}

In this section we present some tools and auxiliary results to be used in the proof of our main result.

\subsection{Probability estimates}

We will use lower tail estimates for random
variables which count the number of copies of certain graphs in a
random graph. The following version of
Janson's inequality, tailored for graphs, is the main estimate we will be using. This particular
statement follows immediately from Theorems $8.1.1$ and $8.1.2$ in
\cite{alon2004probabilistic}.

\begin{theorem}[Janson's inequality] \label{thm:Janson}
Let $p \in (0, 1)$ and let $\{ H_i \}_{i \in
\mathcal{I}}$ be a family of subgraphs of the complete graph on the vertex set
$[n]$. For each $i \in \mathcal{I}$, let $X_i$
denote the indicator random variable for the event that $H_i \subseteq
G_{n,p}$ and, for each ordered pair $(i, j) \in \mathcal{I} \times
\mathcal{I}$ with $i \neq j$, write $H_i \sim H_j$ if $E(H_i)\cap
E(H_j)\neq \emptyset$. Then, for
\begin{align*}
X &= \sum_{i \in \mathcal{I}} X_i,\\
\mu &= \mathbb{E}[X] = \sum_{i \in \mathcal{I}} p^{e(H_i)},\\
\delta &= \sum_{\substack{(i, j) \in \mathcal{I} \times \mathcal{I} \\ H_i \sim H_j}} \mathbb{E}[X_i X_j] = \sum_{\substack{(i, j) \in \mathcal{I} \times \mathcal{I} \\ H_i \sim H_j}} p^{e(H_i) + e(H_j) - e(H_i \cap H_j)}
\end{align*}
and any $0 < \gamma < 1$, we have
$$ \P[X < (1 - \gamma)\mu] \le e^{- \frac{\gamma^2 \mu^2}{2(\mu + \delta)}}. $$
\end{theorem}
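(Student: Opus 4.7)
The plan is to use the standard Chernoff (exponential-moment) argument, with the heart of the matter being a sharp upper bound on the moment generating function that exploits the combinatorial structure of the indicators $X_i$.

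First, apply Markov's inequality to the non-negative variable $e^{-sX}$: for any $s > 0$,
\[
\P[X < (1-\gamma)\mu] \;=\; \P\!\left[e^{-sX} > e^{-s(1-\gamma)\mu}\right] \;\le\; e^{s(1-\gamma)\mu}\,\mathbb{E}\!\left[e^{-sX}\right].
\]
Everything then reduces to showing the moment-generating-function bound
\[
\mathbb{E}\!\left[e^{-sX}\right] \;\le\; \exp\!\left(-s\mu + \tfrac{s^{2}}{2}(\mu+\delta)\right), \qquad s \ge 0.
\]
Granted this, substituting into the Markov bound and choosing $s = \gamma\mu/(\mu+\delta)$ yields exactly $\exp(-\gamma^2\mu^2/(2(\mu+\delta)))$, as required.

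To prove the MGF bound I would set $\phi(s) = \log \mathbb{E}[e^{-sX}]$, note that $\phi(0) = 0$ and $\phi'(0) = -\mu$, and aim for the differential inequality $\phi''(s) \le \mu+\delta$ on $[0,\infty)$; Taylor's theorem with integral remainder then gives $\phi(s) \le -s\mu + \tfrac{s^2}{2}(\mu+\delta)$. A direct computation identifies $\phi''(s)$ with $\mathrm{Var}_{\mathbb{Q}_s}(X)$ under the tilted probability $\mathbb{Q}_s$ of density $e^{-sX}/\mathbb{E}[e^{-sX}]$. Expanding as $\sum_i \mathrm{Var}_{\mathbb{Q}_s}(X_i) + \sum_{i\neq j}\mathrm{Cov}_{\mathbb{Q}_s}(X_i,X_j)$, the diagonal is at most $\sum_i \mathbb{E}_{\mathbb{Q}_s}[X_i]$, which by an FKG/Harris argument (each $X_i$ is an increasing function of the edge indicators while $e^{-sX}$ is decreasing) is bounded by $\sum_i \mathbb{E}[X_i] = \mu$. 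For the off-diagonal terms one splits by whether $H_i \sim H_j$: for disjoint pairs, a correlation inequality forces $\mathrm{Cov}_{\mathbb{Q}_s}(X_i, X_j) \le 0$; for overlapping pairs, the crude bound $\mathrm{Cov}_{\mathbb{Q}_s}(X_i,X_j) \le \mathbb{E}_{\mathbb{Q}_s}[X_iX_j] \le \mathbb{E}[X_iX_j]$ sums to at most $\delta$ by the very definition of $\delta$.

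The main obstacle is the FKG/Harris step, both for comparing tilted diagonal expectations to untilted ones and, more delicately, for showing that tilted cross-covariances of disjoint monotone indicators remain non-positive even though the tilt $e^{-sX}$ destroys the product structure of $G_{n,p}$. This is exactly where the hypothesis that each $X_i$ is the indicator of a monotone-increasing graph event is indispensable; without it one would fall back onto a Chebyshev-type bound of the much weaker form $\mathrm{Var}(X)/(\gamma\mu)^2$. All other ingredients --- Markov, differentiation under the expectation, the identification $\phi'' = \mathrm{Var}_{\mathbb{Q}_s}$, and the one-dimensional minimization in $s$ --- are routine calculus.
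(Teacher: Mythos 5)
This theorem is not proved in the paper at all: it is quoted as a standard tool, stated to follow from Theorems 8.1.1 and 8.1.2 of Alon--Spencer, so there is no internal proof to compare against. Your outer skeleton is exactly the standard one (exponential Markov with $e^{-sX}$, an MGF bound $\E[e^{-sX}]\le\exp(-s\mu+\tfrac{s^2}{2}(\mu+\delta))$, and the optimal choice $s=\gamma\mu/(\mu+\delta)$), and that MGF bound is indeed true with the paper's ordered-pair definition of $\delta$.

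The gap is in how you propose to prove the MGF bound. You reduce it to $\phi''(s)=\mathrm{Var}_{\mathbb{Q}_s}(X)\le\mu+\delta$ under the tilted measure $\mathbb{Q}_s\propto e^{-sX}\,d\P$, and the step you yourself flag as delicate --- that $\mathrm{Cov}_{\mathbb{Q}_s}(X_i,X_j)\le 0$ for edge-disjoint $H_i,H_j$ --- is not justified and is not a routine correlation inequality. The measure $\mathbb{Q}_s$ is no longer a product measure, and it fails the FKG lattice condition: each $X_i$ is a product of edge indicators, hence supermodular, so $X$ is supermodular and the density $e^{-sX}$ is log-\emph{sub}modular, which is the wrong direction for FKG. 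Two edge-disjoint events, while independent under $\P$, become coupled under $\mathbb{Q}_s$ through the other $X_k$'s, and no off-the-shelf inequality gives the sign of their covariance. (Your diagonal step and the overlapping-pair step are fine, since there Harris is applied to the original product measure.) The standard repair avoids second derivatives entirely: bound the first derivative by writing $-\Phi'(s)=\sum_i\E[X_ie^{-sX}]$, conditioning on $X_i=1$, splitting $X=Y_i+Z_i$ with $Y_i=\sum_{j=i\text{ or }j\sim i}X_j$, and applying Harris on the conditioned (still product) measure together with $e^{-sY_i}\ge 1-sY_i$; this yields $-\tfrac{d}{ds}\log\Phi(s)\ge\mu-s(\mu+\delta)$, and a single integration gives the MGF bound. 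As written, your argument has a genuine unproved step at its core.
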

\subsection{Partitioning $H$}\label{sec:par}

In order to embed a graph $H\in \mathcal H((1-\varepsilon))n,\Delta)$ in $G_{n,p}$, we first wish to partition it ``nicely" in a way which will be convenient for us to describe the embedding. Before stating it formally, we need the following definition.

\begin{definition}\label{def:good}
  Let $H\in \mathcal H(m,\Delta)$, let $\mathcal S_1,\ldots, \mathcal S_k$ be a collection of families of induced subgraphs of $H$ and let $H':=H\setminus \left(\cup_i\cup_{S\in \mathcal S_i} S\right)$. We say that the partition $(H',\mathcal S_1,\ldots,\mathcal S_k)$ is $\alpha$-\emph{good} for some $\alpha>0$ if and only if the following hold.
  \begin{enumerate}
    \item For every $1\le i\le k$, all the graphs in $\mathcal S_i$ are isomorphic in $H$. More precisely, for any two graphs $S$ and $S'$ in $\mathcal S_i$, there exists a way to label the vertices $V(S) = \{ v_1, \dots, v_g\}$ and $V(S') = \{v_1', \dots, v_g'\}$ such that $v_j\sim v_l$ if and only if $v_j'\sim v_l'$ and $\deg_H(v_j) = \deg_H(v_j')$ for any $j$;
    %More precisely, for any two graphs $S$ and $S'$ in $\mathcal S_i$, there exists a way to label the vertices $V(S) = \{ v_1, \dots, v_g\}$ and $V(S') = \{v_1', \dots, v_g'\}$ such that $v_j\sim v_{\ell}$ if and only if $v_j'\sim v_{\ell}'$ and $\deg_H(v_j) = \deg_H(v_j')$ for every $j$.
    \item Each $\mathcal S_i$ contains at most $\alpha n$ graphs;
    \item For every $1\leq i\leq k$ and every graph $S\in \mathcal S_i$, $v(S)\ge 3$ and
    $$\frac{e(S)}{v(S)-2}>\frac{\Delta+1}{2}\geq \frac{e(S')}{v(S')-2}$$
    for all proper subgraphs $S'\subset S$ with $v(S')\geq 3$;
    \item Every two distinct graphs in $\cup_i \mathcal S_i$ are vertex disjoint;
    \item For every $i$ and every $S,S'\in \mathcal S_i$ we have $dist_H(S,S')\geq 3$; \label{distcond}
    \item The graph $H'$ has ``small" density, namely, $\frac{e(S)}{v(S)-2}\leq \frac{\Delta+1}{2}$ for every subgraph $S\subseteq H'$ with $v(S)\geq 3$.
  \end{enumerate}
\end{definition}

\begin{lemma}
  \label{lemma:Partitioning}
Let $\alpha\in (0, 1)$, $\Delta\in \mathbb{N}$ and let $H\in \mathcal H(m,\Delta)$ with $m$ being sufficiently large. Then, there exists an $\alpha$-good partition $(H',\mathcal S_1,\ldots,\mathcal S_k)$ of $H$ with $k\leq k(\alpha, \Delta)$ where $k(\alpha, \Delta)$ is some constant depending only on $\alpha$ and $\Delta$.
\end{lemma}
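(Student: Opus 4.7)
The plan is a greedy extraction followed by grouping. Set $H_0 := H$ and iterate: at step $t$, if the current graph $H_t$ contains an induced subgraph $S$ with $v(S)\ge 3$ and $e(S)/(v(S)-2) > (\Delta+1)/2$, pick one that is minimal with respect to this density property, that is, no proper subgraph $S' \subsetneq S$ with $v(S')\ge 3$ satisfies $e(S')/(v(S')-2) > (\Delta+1)/2$; then add $S$ to the extraction and let $H_{t+1}$ be the subgraph of $H_t$ induced on $V(H_t)\setminus V(S)$. The loop halts when the residual $H'$ contains no such dense subgraph, which is precisely condition~(6) of Definition~\ref{def:good}. The extracted subgraphs are pairwise vertex-disjoint by construction, so condition~(4) holds, and the process terminates in at most $m/3$ iterations since every extracted $S$ has at least three vertices.

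The key structural bound is that every minimally dense $S$ satisfies $v(S) \le 2\Delta+1$. Indeed, combining $e(S) > (\Delta+1)(v(S)-2)/2$ with $e(S) \le \Delta v(S)/2$ (from $\Delta(H) \le \Delta$) yields $v(S) < 2(\Delta+1)$. Consequently only constantly many ``labeled types'' of $S$ can appear, where a type records the isomorphism class of the underlying graph $S$ together with the $H$-degree of each of its vertices; call this bound $C_1 = C_1(\Delta)$. Grouping the extracted subgraphs by labeled type partitions them into at most $C_1$ classes, each of which satisfies condition~(1).

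To impose the distance requirement~(5) and the size bound~(2), refine each class further. Form a conflict graph on the class whose edges connect pairs of extracted subgraphs at distance $<3$ in $H$. Since each such subgraph has at most $2\Delta+1$ vertices, each vertex of $H$ has at most $1+\Delta+\Delta^2$ vertices within distance $2$, and distinct extracted subgraphs are vertex-disjoint, the maximum degree of this conflict graph is bounded by some $C_2 = C_2(\Delta)$. A greedy proper coloring with $C_2+1$ colors splits the class into subfamilies each satisfying condition~(5). Finally, any subfamily with more than $\alpha n$ graphs is arbitrarily partitioned into $\lceil 1/\alpha\rceil$ equal-sized pieces to enforce condition~(2), yielding altogether at most
\[
k(\alpha,\Delta) := C_1(\Delta)\cdot (C_2(\Delta)+1)\cdot \lceil 1/\alpha\rceil
\]
families, a constant depending only on $\alpha$ and $\Delta$. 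The main technical point is the size bound $v(S)\le 2\Delta+1$ on minimally dense subgraphs, which both guarantees termination of the extraction and ensures only constantly many labeled types; once this is established, the remaining steps are a routine bounded-degree coloring and partitioning argument.
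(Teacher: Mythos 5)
Your proof is correct and is essentially the same as the paper's: greedily extract minimal (by inclusion) induced dense subgraphs, use $e(S)\le \Delta v(S)/2$ to bound $v(S)\le 2\Delta+1$, then split the extracted collection by isomorphism type (with vertex $H$-degrees), by a bounded-degree distance-$\ge 3$ coloring, and finally into pieces of size at most $\alpha n$. The only cosmetic difference is the order of the type-grouping and the distance-coloring steps (the paper colors first, then groups by type), which does not affect the argument.
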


\begin{proof}
First, we find the graph $H'$ greedily as follows.
  Let $\mathcal S:=\emptyset$ and $H_0:=H$. As long as there exists an induced subgraph $S\subseteq H_i$ with at least three vertices and $e(S)/(v(S)-2)>(\Delta+1)/2$, pick a minimal (with respect to inclusion) such graph $S$ and update $\mathcal S:=\mathcal S\cup \{S\}$ and let $H_{i+1}:=H_i\setminus S$. Note that this procedure must terminate at some step $\ell$, and let $H':=H_{\ell}$. Observe that $e(S)/(v(S)-2)\leq (\Delta+1)/2$ for every $S\subseteq H'$ with $v(S)\ge 3$ (this verifies (vi) in the definition of $\alpha$-goodness).

  Second, note that since $\Delta(H)\leq \Delta$, it follows that for every subgraph $S\subseteq H$ we have $e(S)\le v(S)\Delta/2$. In particular, for every $S\in \mathcal S$, together with the inequality $e(S)/(v(S)-2)>(\Delta+1)/2$, we conclude that $v(S)\leq 2\Delta +1$.

  Third, define an auxiliary graph $\mathcal A$ with vertex set $\mathcal S$, where for $S$ and $S'$ in $\mathcal S$, $SS'$ is an edge in $\mathcal A$ if and only if $dist_H(S,S')\leq 2$. Note that since $\Delta(H)\leq \Delta$ and since $v(S)\leq 2\Delta+1$ for every $S\in \mathcal S$, it follows that the maximum degree of $\mathcal A$ is $\Delta(\mathcal A)\leq (2\Delta+1)\Delta^{2}\le 3\Delta^3$. Therefore, one can (greedily) find a proper coloring of the vertices of $\mathcal A$ using $c=3\Delta^3+1$ colors. Let $\mathcal T_i$ denote the color class $i$. Observe that for every $S,S'\in \mathcal T_i$, $dist_H(S,S')\geq 3$.

  Next, since $v(S)\leq 2\Delta+1$ for all $S\in\mathcal S$, it follows that there are at most (say) $s:=\Delta^{2\Delta+1}2^{{2\Delta+1 \choose 2}}$ possible distinct such graphs (up to graph isomorphism in $H$), and therefore, one can further divide each of the classes $\mathcal T_i$ into at most $s$ subsets, each of which consists of isomorphic graphs.

  Finally, if any of these subsets contain more than $\alpha n$ graphs, we further partition each of them into at most $\alpha^{-1}$ subsets. All in all, by relabeling, for $k\leq sc\alpha^{-1}$, one obtains an $\alpha$-good partition $\mathcal S_1,\ldots,\mathcal S_k$ and $H'$ as desired.
\end{proof}

\subsection{Embedding the ``sparse" part}
\label{sec:rior}
Let $H\in \mathcal H(m,\Delta)$ and let $(H',\mathcal S_1,\ldots,\mathcal S_k)$ be an $\alpha$-good partition of $H$ as obtained by Lemma \ref{lemma:Partitioning}. Note that, by definition of $\alpha$-goodness,  for every $S\subseteq H'$ with at least $3$ vertices, one has
$$e(S)/(v(S)-2)\leq (\Delta+1)/2.$$

 In order to embed $H'$, we make use of the following theorem due to Riordan (an immediate corollary of \cite[Theorem 5]{riordan2000spanning}).  We use $G_{n,M}$ to denote the random variable uniformly distributed over all labeled graphs on the vertex set $[n]$ with exactly $M$ edges.
\begin{theorem}\label{riordan}
Let $\delta>0$ and let $\Delta\geq 2$ be a positive integer. Then, there exists an $N_0$ such that for all $n\ge N_0$ the following holds.
For every $H\in \mathcal H(n,\Delta)$, let
$$\gamma(H) = \max_{S\subseteq H, v(S)\ge 3}\frac{e(S)}{v(S)-2}.$$
Then with probability at least $1-\delta$ the random graph $G_{n,M}$ contains a copy of $H$, provided that $M = p'{n \choose 2}$ with $p'\geq n^{-1/\gamma}\log\log n$.
\end{theorem}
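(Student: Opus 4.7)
The theorem is presented as an immediate corollary of Theorem~5 of \cite{riordan2000spanning}, so my plan is to derive it directly from that result rather than reprove it from scratch. Riordan's theorem is a second-moment embedding statement: it gives a sufficient density condition on $G_{n,M}$ under which every (almost-)spanning bounded-degree graph $H$ is contained whp. The relevant density parameter is the ``$2$-density'' $\max_{S\subseteq H,\,v(S)\ge 3} e(S)/(v(S)-2)$, which is exactly the quantity $\gamma(H)$ defined in the statement. The appearance of $v(S)-2$ in the denominator (rather than the usual $v(S)$ from the sparse regime) is characteristic of the almost-spanning setting: when one counts pairs of labelled copies of $H$ in $G_{n,M}$ that overlap on a prescribed graph $S$, the combinatorial factor cancels an $n^2$, and the ratio between the corresponding term of the second moment and the square of the expectation scales like $(n^{v(S)-2}p'^{e(S)})^{-1}$.

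My plan therefore has two steps. First, I would transcribe Riordan's Theorem~5 and verify that its density hypothesis is equivalent to (or implied by) a condition of the form ``$p'^{e(S)} n^{v(S)-2} \to \infty$ fast enough for every $S \subseteq H$ with $v(S) \ge 3$.'' Under the stated bound $p' \ge n^{-1/\gamma(H)}\log\log n$, one has
\[
p'^{e(S)} \, n^{v(S)-2} \;\ge\; (\log\log n)^{e(S)} \;\longrightarrow\; \infty
\]
for every such $S$, with plenty of room, so Riordan's hypothesis is satisfied. Second, I would apply Riordan's theorem to conclude that $G_{n,M}$ contains a copy of $H$ with probability $1-o(1)$; since $1-o(1) \ge 1-\delta$ for all $n$ beyond some threshold $N_0 = N_0(\delta, \Delta)$, the desired conclusion follows.

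The only genuine ``obstacle'' is notational: matching Riordan's precise density condition, which in his paper may be expressed in a slightly different but equivalent form (possibly with a different polylogarithmic slack, which is absorbed by our $\log\log n$ factor), to the clean statement here. Because the result is labelled an immediate corollary and no new probabilistic ideas are required beyond Riordan's own second-moment argument, this matching step is expected to be routine bookkeeping rather than a substantive difficulty.
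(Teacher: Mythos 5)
Your proposal matches the paper exactly: the paper offers no independent proof of this statement, presenting it precisely as an immediate corollary of Riordan's Theorem 5, with the hypothesis $p'\geq n^{-1/\gamma}\log\log n$ serving only to guarantee that Riordan's density condition (equivalently, $np'^{\gamma}\to\infty$, i.e.\ $p'^{e(S)}n^{v(S)-2}\to\infty$ for the bottleneck subgraph $S$) holds with room to spare. Your verification of that hypothesis is correct, so the approach and the argument coincide with the paper's.
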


Note that the property ``containing $H$" is a monotone increasing property, and since whp $e(G_{n,p})\geq (1-o(1))\binom{n}{2}p$ if $\binom{n}{2}p = \omega(1)$, it follows that the same conclusion of Theorem \ref{riordan} holds for $G_{n,p}$, provided that (say) $p\geq 2n^{-1/\gamma}\log\log n$.

\subsection{Embedding the small dense graphs}\label{sec:emb}

The following technical lemma is the primary ingredient in the proof of our main result. Roughly speaking, given an $\alpha$-good partition of $H$, we will embed the graphs from the sets $\mathcal S_i$ with a hypergraph matching theorem (see Theorem \ref{thm:hyper_match}). Before making the exact statement, we need the following preparation.

Let $\varepsilon>0$ and $\Delta\geq 5$. Let $H\in \mathcal H((1-\varepsilon)n,\Delta)$, and let $(H',\mathcal S_1,\ldots,\mathcal S_k)$ be an $\alpha$-good partition of $H$ where $\alpha=\varepsilon/\left (3(2\Delta+1)^2\right )$ and $k\le k(\alpha,\Delta)$ (the existence of such a partition is ensured by Lemma \ref{lemma:Partitioning}). Our strategy in the proof of Theorem \ref{thm:main} is to expose $G_{n,p}$ in $k+1$ rounds $0\leq h\leq k$ as follows. In round $h=0$, we embed $H'$ into a $G_0:=G_{n,q}$, where $q$ is the unique number $q\in (0,1)$ satisfying $(1-q)^{k+1}=1-p$. For each $0\leq h\leq k-1$, in round $h+1$, we extend our current embedding $f$ of $H_h:=H'\cup \left(\cup_{S\in \mathcal S_i,i\leq h}S\right)$ into an embedding of $H_{h+1}:=H_h\cup \left(\cup_{S\in \mathcal S_{h+1}}S\right)$ using the edges of a graph $G_{h+1}=G_{n,q}$ which is independent of all the other $G_j$.  Here we abuse notation and let $f$ denote the partial embedding at each stage.  Our goal in this section is to prove the following key lemma which enables us to extend $f$.

\begin{lemma}
  \label{key lemma}
With the above setting, for every $h\leq k-1$, assuming that $f$ is an embedding of $H_h$ into $\cup_{i\leq h}G_i$, whp the embedding $f$ can be extended into an embedding of $H_{h+1}$ using the edges of $G_{h+1}$.
\end{lemma}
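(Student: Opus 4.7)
The plan is to extend $f$ by finding a rainbow matching in a collection of hypergraphs, one per graph $S \in \mathcal S_{h+1}$, and to verify the hypothesis of the Aharoni--Haxell matching theorem via Janson's inequality applied to the fresh randomness $G_{h+1}$. For each $S \in \mathcal S_{h+1}$, define the $v(S)$-uniform hypergraph $\mathcal H_S$ on $V(G) \setminus f(V(H_h))$ whose hyperedges are the image sets of valid placements of $S$: injective maps $\phi \colon V(S) \to V(G) \setminus f(V(H_h))$ such that every internal edge of $S$ is realised as an edge of $G_{h+1}$ and every $H$-edge from $V(S)$ to $V(H_h)$ maps to an edge $(\phi(v), f(u))$ of $G_{h+1}$. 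Because any two distinct $S, S' \in \mathcal S_{h+1}$ are vertex-disjoint and satisfy $dist_H(S,S') \geq 3$, a rainbow matching in $\{\mathcal H_S\}_{S \in \mathcal S_{h+1}}$ is precisely the data of an extension of $f$ to an embedding of $H_{h+1}$ into $\bigcup_{i \leq h+1} G_i$.

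By Aharoni--Haxell it suffices to show that, with probability $1-o(1)$ over $G_{h+1}$, for every $S \in \mathcal S_{h+1}$ and every $T \subseteq V(G)$ with $|T| \leq (2v(S)-1)(|\mathcal S_{h+1}|-1) = O(\alpha n)$, the hypergraph $\mathcal H_S$ contains a placement disjoint from $T$. Fix such $S, T$, and let $X_{S,T}$ count valid placements of $S$ whose image lies in $V(G) \setminus (T \cup f(V(H_h)))$. Its expectation is $\mu := \mathbb E[X_{S,T}] \sim n^{v(S)} q^{e(S) + e^{(h)}_{\mathrm{ext}}(S)}$, where $e^{(h)}_{\mathrm{ext}}(S)$ is the number of $H$-edges from $V(S)$ to $V(H_h)$. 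The strict density inequality $e(S)/(v(S)-2) > (\Delta+1)/2$ bounds $e(S)$ from below, and the minimality clause, applied to $S \setminus \{v\}$ (which is legitimate since $v(S) \geq 4$ when $\Delta \geq 5$), forces $\deg_S(v) > (\Delta+1)/2$ and therefore caps each external degree of $S$ by $\lfloor (\Delta-2)/2 \rfloor$, bounding $e^{(h)}_{\mathrm{ext}}(S)$ in terms of $\Delta$. Together with the hypothesis $p = \omega\bigl((n^{-1}\log^{1/\Delta}n)^{2/(\Delta+1)}\bigr)$ these estimates give a growing lower bound on $\mu$. For the pair-correlation sum $\delta$, one writes $\delta/\mu^2$ as a finite sum, indexed by nonempty common sub-placements $T' \subseteq S$, of terms of the form $n^{-v(T')} q^{-e(T')}$; the sub-density upper bound $e(T')/(v(T')-2) \leq (\Delta+1)/2$ for proper $T' \subsetneq S$ with $v(T') \geq 3$ makes each such contribution small, while the $v(T')=2$ term is absorbed against $\mu$. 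Janson's inequality then yields $\P[X_{S,T}=0] \leq \exp\bigl(-\Omega(\mu^2/(\mu+\delta))\bigr)$.

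The main obstacle is quantitative: in the tight regime just above the $K_{\Delta+1}$-factor threshold, the Janson exponent $\mu^2/(\mu+\delta)$ must be pushed up to order $\Omega(n)$ in order to survive a union bound over the $O(\alpha n)$ choices of $S$ and the $\binom{n}{O(\alpha n)} = e^{O(n)}$ blocking sets $T$. The technical heart of the proof is thus to simultaneously exploit the density inequality on $e(S)$ (to enlarge $\mu$) and the sub-density inequality from minimality (to shrink $\delta$), while using the slack hidden in $p = \omega(\cdot)$ and the smallness of $\alpha = \varepsilon/(3(2\Delta+1)^2)$, until the Janson exponent beats the union bound. Once this is achieved, the Aharoni--Haxell hypothesis is verified for every admissible $T$, a rainbow matching exists, and $f$ is extended to an embedding of $H_{h+1}$ as claimed.
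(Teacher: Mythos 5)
Your overall architecture (Aharoni--Haxell on one hypergraph per $S_i\in\mathcal S_{h+1}$, hypothesis verified by Janson applied to the fresh randomness $G_{h+1}$) is the same as the paper's, and your observations about $\mu$ and about bounding external degrees via minimality are sound. But the way you propose to verify the Aharoni--Haxell hypothesis has a genuine gap, and you have in fact put your finger on it yourself without resolving it: you want to union bound over \emph{all} blocking sets $T$ of size $O(\alpha n)$, i.e.\ over $e^{O(n)}$ events, and you assert that the Janson exponent $\mu^2/(\mu+\delta)$ can be ``pushed up to $\Omega(n)$.'' It cannot. For a single hypergraph $\mathcal H_{S_i}$ the relevant count is of placements of $S_i$ that additionally attach correctly to the fixed sets $W_{ij}$, and the two-sided bound $\Delta(\Delta+1)\le 2e(S)+2\sum_j d_j\le(\Delta+1)s$ shows that at $p=\omega\bigl((n^{-1}\log^{1/\Delta}n)^{2/(\Delta+1)}\bigr)$ the expectation of this count is only $\omega(\log n)$ --- polylogarithmic, and this is tight (e.g.\ for $S=K_{\Delta+1}$). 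Janson therefore gives failure probability $n^{-\omega(1)}$ per set $T$, which is nowhere near enough to beat $e^{O(n)}$ choices of $T$. No amount of exploiting the density inequalities or the slack in $\omega(\cdot)$ changes this; the proposed union bound is a dead end.

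The paper's fix is structural, not a sharper computation. For each $r=|\mathcal I|$ it proves a statement whose failure probability is $\exp(-\omega(r\log n))$, and it only quantifies over sets $D'$ obtained from $D_h$ by deleting at most $s^2r$ vertices, so the union bound is over merely $\exp(O(r\log n))$ pairs $(D',\mathcal I)$. Two features make the exponent beat this bound: (a) the matching of size $s(|\mathcal I|-1)+1$ required by Aharoni--Haxell for a given $\mathcal I$ is built greedily, so one never needs to avoid an arbitrary set $T$ of size $\Theta(n)$, only the $O(r)$ vertices already used; and (b) the Janson count is taken over placements attaching to $W_{ij}$ for \emph{some} $i\in\mathcal I$, so $\mu$ scales linearly in $r$, i.e.\ $\mu=\omega(r\log n)$, in lockstep with the entropy of the union bound. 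You would need to restructure your argument along these lines. Separately, your treatment of $\delta$ is too coarse: for two placements of the \emph{same} $S_i$ the shared edges include the pendant edges into $W_{ij}$, so the relevant exponent is $e(J)+\sum_{j\in J}d_j$ rather than $e(J)$, and controlling this (especially when $v(J)\in\{s-2,s-1\}$) requires the additional inequalities the paper derives from \eqref{removalcondition} and the parity of $\Delta s$; this is also where the hypothesis $\Delta\ge 5$ enters.
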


Clearly, given an embedding of $H'$ into $G_0$, by iterating Lemma \ref{key lemma} $k$ times, one can (whp) extend $f$ into an embedding of $H$ into $\cup_{h=0}^{k}G_h$ which has the exact same distribution as a $G_{n,p}$.

To prove Lemma \ref{key lemma}, we make use of the following Hall-type argument for finding large matchings in hypergraphs due to Aharoni and Haxel \cite{aharoni2000hall}. This idea is based on the one in \cite{CFNS}, although here we have to be more careful in order to obtain the correct bound on $p$. We now state the theorem and describe how to use it in our setting.

\begin{theorem}[Hall's criterion for hypergraphs, Corollary 1.2 \cite{aharoni2000hall}]  \label{thm:hyper_match}
Let $\{L_1, \dots, L_t\}$ be a family of $s$-uniform hypergraphs on the same vertex set.
If, for every $\mathcal{I} \subseteq [t]$, the hypergraph $\bigcup_{i \in \mathcal{I}} L_i$ contains a matching of size greater than $s(|\mathcal I| - 1)$, then there exists a function $g:[t] \rightarrow \bigcup_{i = 1}^t E(L_i)$ such that $g(i) \in E(L_i)$ and $g(i) \cap g(j) = \emptyset$ for $i \neq j$.
\end{theorem}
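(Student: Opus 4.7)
The plan is to apply the Aharoni--Haxell hypergraph matching theorem (Theorem~\ref{thm:hyper_match}) to a family of hypergraphs whose hyperedges encode valid ways to extend $f$ to a single $S\in\mathcal{S}_{h+1}$. Let $s=v(S)$ denote the common vertex count of any $S\in\mathcal{S}_{h+1}$ (all isomorphic by (i)). For each $S\in\mathcal{S}_{h+1}$, define the $s$-uniform hypergraph $L_S$ on $V_{\mathrm{free}}:=[n]\setminus f(V(H_h))$ whose hyperedges are the sets $\{u_1,\dots,u_s\}$ admitting a bijection $\phi\colon V(S)\to\{u_1,\dots,u_s\}$ with $\phi(v_i)\phi(v_j)\in E(G_{h+1})$ for every $v_iv_j\in E(S)$ and $\phi(v_i)f(w)\in E(G_{h+1})$ for every $v_i\in V(S)$ and $w\in V(H_h)$ with $v_iw\in E(H)$. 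A system of distinct representatives for $\{L_S\}_{S\in\mathcal{S}_{h+1}}$ is exactly an extension of $f$ to $H_{h+1}$ using $G_{h+1}$-edges, so it suffices to verify Hall's condition of Theorem~\ref{thm:hyper_match}: for every $\mathcal{I}\subseteq\mathcal{S}_{h+1}$, $\bigcup_{S\in\mathcal{I}}L_S$ has a matching of size $>s(|\mathcal{I}|-1)$.

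A standard greedy argument (iteratively extending a maximum matching in an $s$-uniform hypergraph) reduces this to the covering property: for every $U\subseteq V_{\mathrm{free}}$ and every $B\subseteq\mathcal{S}_{h+1}$ with $|U|\le s^2(|B|-1)$, some $S\in B$ has a hyperedge of $L_S$ avoiding $U$. Writing $X_S(U)$ for the number of hyperedges of $L_S$ disjoint from $U$, it is therefore enough to prove that whp no such $(U,B)$ has $X_S(U)=0$ for every $S\in B$. The choice $\alpha=\varepsilon/(3(2\Delta+1)^2)$ gives $s^2\alpha n\le\varepsilon n/3$, so any relevant $U$ leaves $|V_{\mathrm{free}}\setminus U|\ge 2\varepsilon n/3$.

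The main workhorse is Janson's inequality (Theorem~\ref{thm:Janson}) applied to $X_S(U)$. Letting $e_{\mathrm{ext}}(S)$ denote the number of edges of $H$ between $V(S)$ and $V(H_h)$, one has $\mu_S(U):=\E[X_S(U)]=\Theta(n^s q^{e(S)+e_{\mathrm{ext}}(S)})$. Combining $e_{\mathrm{ext}}(S)\le\Delta s-2e(S)$ with property (iii), $e(S)>(\Delta+1)(s-2)/2$, together with integrality of $e(S)$ (which forces $s\ge\Delta$ for $\Delta\ge5$), yields $e(S)+e_{\mathrm{ext}}(S)\le s(\Delta+1)/2$; substituting $q=\omega((n^{-1}\log^{1/\Delta}n)^{2/(\Delta+1)})$ then gives $\mu_S(U)=\omega(\log n)$, meaning $\mu_S(U)>C\log n$ for every constant $C$ and $n$ large. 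For the second-moment term, each pair of intersecting ordered embeddings corresponds to an induced proper subgraph $S'\subsetneq S$ whose density satisfies $e(S')/(v(S')-2)\le(\Delta+1)/2$ by the minimality of $S$ built into property (iii); summing the standard contributions over all such $S'$ gives $\delta_S(U)=O(\mu_S(U))$. Janson then delivers
\[
\P[X_S(U)=0]\le\exp(-c\,\mu_S(U))\le n^{-C}
\]
for every constant $C$ and $n$ large.

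To finish, the single-$S$ bounds are combined over $(U,B)$. Property (v), $\mathrm{dist}_H(S,S')\ge3$ for distinct $S,S'\in\mathcal{S}_{h+1}$, makes the anchor sets $f(N_H(V(S))\cap V(H_h))$ pairwise disjoint, so the only shared randomness among the $X_S(U)$'s lies in the edges of $G_{h+1}$ inside $V_{\mathrm{free}}$. Either conditioning on those edges and reapplying Janson, or applying Janson jointly to $\sum_{S\in B}X_S(U)$, gives $\P[\bigcap_{S\in B}\{X_S(U)=0\}]\le n^{-C|B|}$ for every constant $C$. Since there are at most $\binom{n}{s^2b}\binom{\alpha n}{b}\le n^{(s^2+1)b}$ pairs $(U,B)$ with $|B|=b$, the union bound closes for $C>s^2+1$. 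The main obstacle is the Janson estimate: obtaining $\mu_S(U)=\omega(\log n)$ relies on both the exact density threshold $(\Delta+1)/2$ used to define $\alpha$-goodness and the $\omega(1)$ slack in the hypothesis on $p$, while $\delta_S(U)=O(\mu_S(U))$ uses the minimality of $S$ crucially. This also explains the restriction to $\Delta\ge5$: for $\Delta=4$, $\mu_S(U)$ only beats constantly many logarithms, too weak for the union bound.
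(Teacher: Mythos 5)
Your proposal does not prove the statement it is supposed to prove. The statement is Theorem~\ref{thm:hyper_match} itself --- the Aharoni--Haxell Hall-type criterion asserting that if every union $\bigcup_{i\in\mathcal I}L_i$ of the $s$-uniform hypergraphs contains a matching of size greater than $s(|\mathcal I|-1)$, then a system of pairwise disjoint representatives $g(1),\dots,g(t)$ exists. Your very first sentence is ``apply the Aharoni--Haxell hypergraph matching theorem (Theorem~\ref{thm:hyper_match})'', so you are assuming the result you were asked to establish and then using it as a black box. What you actually sketch is the paper's \emph{application} of this theorem, i.e.\ the content of Lemma~\ref{key lemma} together with Lemma~\ref{lemma:concentration}: the construction of the hypergraphs $L_i$ from the partial embedding, the reduction of Hall's condition to a covering statement via greedy extension of a matching, and the Janson computation showing $\mu=\omega(r\log n)$ and $\delta=o(\mu^2 r^{-1}\log^{-1}n)$. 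That material is a reasonable (if compressed) account of Section~\ref{sec:emb}, but it is an answer to a different question.

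A genuine proof of Theorem~\ref{thm:hyper_match} is a deterministic combinatorial/topological statement with no randomness, no graph $H$, and no Janson's inequality anywhere. The known argument of Aharoni and Haxell proceeds by showing that the matching complex of each $\bigcup_{i\in\mathcal I}L_i$ is sufficiently topologically connected (or, in their more elementary formulation, by an inductive deficiency argument generalizing the alternating-path proof of Hall's theorem to $s$-uniform hypergraphs), and none of that machinery appears in your write-up. The paper itself does not reprove this theorem either --- it imports it as Corollary~1.2 of the cited reference --- but if you are asked to supply a proof of this statement, you must supply one of the hypergraph Hall criterion itself, not of the lemma in which it is used.
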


To apply Theorem \ref{thm:hyper_match} to our setting in Lemma \ref{key lemma}, we proceed as follows.
First, let us enumerate $\mathcal S_{h+1}=\{S_1,\ldots,S_t\}$ and recall that since our partition is $\alpha$-good, all the graphs $S_i$ in $\mathcal S_{h+1}$ are isomorphic (henceforth, we refer to all the graphs $S_i$ as $S$). Moreover, letting $s=v(S)$, from the definition of $\alpha$-goodness and the proof of Lemma \ref{lemma:Partitioning}, we have
\begin{equation}
  t\le \alpha n,\label{boundt}
  \end{equation}
  \begin{equation}
  s\le 2\Delta + 1,\label{boundg}
  \end{equation}
  and for any proper subgraph $S'$ of $S$ with $v(S') \geq 3$,
\begin{equation}
 \frac{e(S')}{v(S')-2}\le \frac{\Delta+1}{2}<\frac{e(S)}{s-2}.\label{removalcondition}
 \end{equation}

Second, suppose that $V(S)=[s]$, and for every $i\leq t$, let us enumerate the vertex set of $S_i$ as $v_{i1}, \dots, v_{is}$ in such a way that the map $\varphi(j)=v_{ij}$ is a graph isomorphism between $S$ and $S_i$, and $\deg_{H}(v_{ij})=\deg_{H}(v_{i'j})$, for all $i, i'$ and $j$. Let $D_h = [n]\setminus f(V(H_{h}))$. Since $v(H_h)\le (1-\varepsilon)n$, it follows that $|D_h|\ge \varepsilon n$. Recall that
\begin{equation}
\alpha = \frac{\varepsilon}{3(2\Delta+1)^{2}}.\label{def_alpha}
\end{equation}
Clearly, we have that
\begin{equation} \label{obs:1}
|D_h| - s^{2}t\ge \varepsilon n - s^{2}\alpha n\ge \varepsilon n/2,
 \end{equation}
and this will be useful in proving Lemma \ref{lemma:concentration} below.

Third, let $W_{ij}$ be the (image of the) set of neighbors of $v_{ij}$ in $H_{h}$. Note that all the $W_{ij}$ are fixed because $H_{h}$ has already been embedded and there are no edges between graphs in $\mathcal S_{h+1}$. Moreover,  by (v) of Definition \ref{def:good}, the family $\{W_{i j} \}_{ij \in [t] \times [s]}$ satisfies
\begin{enumerate}[(i)]
\item $W_{ij} \subseteq [n] \setminus D_h$, and $|W_{i j}|\le d_j:=\Delta - \indeg(v_{ij})$ where $\indeg(v_{ij})$ is the degree of $v_{ij}$ in $S_i$, and
\item $W_{i j} \cap W_{i' j'} = \emptyset$ for all $i \neq i'$.
\end{enumerate}

Roughly speaking, the larger the sets $W_{ij}$ are, the harder it is to embed $\mathcal S_{h+1}$. Thus, without loss of generality, we can assume that $W_{ij}$ contains all the neighbors of $v_{ij}$ in $H$, and so, by (i) of Definition \ref{def:good}, $|W_{ij}| = |W_{i'j}|$ for all $i, i'$, and $j$. One can reduce the general case to this case by temporarily moving vertices from $D_{h}$ to $W_{ij}$ so that $|W_{ij}| = \deg_H(v_{ij}) - \indeg(v_{ij})$ for all $i,j$. Since there are $st$ vertices in $\cup_{S\in \mathcal S_{h+1}} V(S)$, the number of moved vertices is at most $\Delta st$ and the number of remaining vertices in $D_{h}$ is still greater than $\eps n/2 + s^{2}t$. That means \eqref{obs:1} still holds, which is enough for us to implement the embedding of $H_{h+1}$.

Next, for each $1\le i\le t$, let $L_i$ be the $s$-uniform hypergraph on the vertex set $D_{h}$ where $(v_1, \dots, v_s)\in D_h^{s}$ is a hyperedge if and only if, in $G_{h+1}=G_{n,q}$ (the new random graph with edge probabillity $q$), $v_j$ is connected to every vertex in $W_{ij}$ and $v_{ij}v_{ij'}\in E(S_i)$ implies $v_jv_{j'}\in E(G_{h+1})$. That is, every hyperedge $e$ of $L_i$ corresponds to an embedding of $S_i$ which extends $f$. A moment's thought now reveals that extending $f$ into an embedding of $H_{h+1}$ is equivalent to showing that there exists a function $g:[t] \rightarrow \bigcup_{i = 1}^t E(L_i)$ such that $g(i) \in E(L_i)$ and $g(i) \cap g(j) = \emptyset$ for every $i \neq j$. To this end, in order to check that the assumptions of Theorem \ref{thm:hyper_match} are satisfied (and therefore, such a $g$ exists), we need the following technical lemma.

\begin{lemma}\label{lemma:concentration} For every $1\leq h\leq k$ and $1\le r\le t$, with probability at least $1 - \exp\left (-\omega(r\log n)\right )$, the following holds. For every $D'\subseteq D_h$
of size $|D'|\ge |D_h| - s^{2}r$, and every $\mathcal I\subseteq [t]$ of size $|\mathcal I|=r$, $G_{h+1}$  contains a copy $S'$ of $S$ with $V(S') = \{v_1, \ldots, v_s\}$ in which $v_j$ is the copy of $j$ for each $j \in [s] = V(S)$.  Furthermore, $v_j\in D'$ for all $1\le j\le s$ and for some $i \in \mathcal I$, $W_{i j}$ is contained in the neighborhood of $v_j$ for all $1\le j\le s$.
\end{lemma}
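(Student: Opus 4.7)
My plan is to apply Janson's inequality (Theorem~\ref{thm:Janson}) to a carefully chosen family of indicator events and then take a union bound over the choices of $(D', \mathcal{I})$. Fix $h$, a subset $D' \subseteq D_h$ with $|D'| \geq |D_h| - s^{2} r \geq \varepsilon n /2$, and $\mathcal{I} \subseteq [t]$ with $|\mathcal{I}| = r$. For each $i \in \mathcal{I}$ and each ordered tuple $\vec v = (v_1, \ldots, v_s)$ of distinct vertices in $D'$, let $A_{i, \vec v}$ be the event that $G_{h+1}$ contains all edges $\{v_j, v_{j'}\}$ for $j j' \in E(S)$ together with all cross-edges $\{v_j, u\}$ for $u \in W_{ij}$ and $j \in [s]$. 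Writing $e_T := e(S) + \sum_j d_j = s\Delta - e(S)$ for the number of edges required by each event and $X := \sum_{(i, \vec v)} \mathbbm{1}_{A_{i, \vec v}}$, the conclusion for this pair amounts to $\Pr[X = 0] \leq e^{-\omega(r \log n)}$, since the number of pairs $(D', \mathcal{I})$ is at most $\binom{n}{s^{2} r} \binom{t}{r} = e^{O(r \log n)}$.

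The first key step is to establish $\mu := \mathbb{E}[X] = r (|D'|)_s q^{e_T} = \omega(r \log n)$. Plugging in $q = \omega\bigl((n^{-1} \log^{1/\Delta} n)^{2/(\Delta+1)}\bigr)$ gives
\[
\mu \geq \omega(1) \cdot r \cdot n^{(2 e(S) - s(\Delta-1))/(\Delta+1)} (\log n)^{2(s\Delta - e(S))/(\Delta(\Delta+1))}.
\]
A short case analysis using the density inequality $2 e(S) > (s-2)(\Delta+1)$ from item~(iii) of Definition~\ref{def:good} shows that the exponent of $n$ is always nonnegative; moreover, whenever it equals zero (which forces $s = \Delta$ and $S = K_\Delta$, so $s \ge \Delta \ge 5$) the exponent of $\log n$ equals exactly $1$. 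Hence $\mu = \omega(r \log n)$ in every case.

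The second, more technical step is to upper-bound $\delta = \sum_{(i, \vec v) \sim (i', \vec v')} \Pr[A_{i, \vec v} \cap A_{i', \vec v'}]$. I split it as $\delta = \delta_A + \delta_B$ according to whether $i = i'$ or $i \ne i'$. By construction, $W_{ij} \cap W_{i' j'} = \emptyset$ for $i \ne i'$, so pairs contributing to $\delta_B$ can only share edges inside the $S$-copy. Parametrising by the shared subgraph $F \subseteq S$ (constantly many choices) and bounding the number of pairs with shared $F$ by $O(r^{2} n^{2s - v(F)})$ yields
\[
\frac{\delta_B}{\mu^{2}} \leq O(1) \sum_{F \neq \emptyset} \frac{1}{n^{v(F)} q^{e(F)}}.
\]
Using the proper-subgraph density $e(F)/(v(F)-2) \leq (\Delta+1)/2$ for $v(F) \geq 3$ and handling $v(F) = 2$ directly, each summand is $O(n^{-2\Delta/(\Delta+1)})$, giving $\delta_B \leq \mu^{2} / n^{\Omega(1)}$. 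For $\delta_A$, shared cross-edges must also be accounted for; here the crucial structural fact, obtained by applying item~(iii) of Definition~\ref{def:good} to the proper subgraph $S \setminus \{v\}$ (which has $\geq 3$ vertices since $s \geq \Delta \geq 5$), is that $\indeg_S(v) > (\Delta+1)/2$ for every $v \in V(S)$, whence $d_j \leq \lfloor (\Delta-2)/2 \rfloor$. This strict upper bound on $d_j$, combined with a parametrisation now indexed by both the shared-position pattern and the cross-overlap count, yields the same polynomial savings: $\delta_A \leq \mu^{2}/n^{\Omega(1)}$.

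Finally, with $\mu = \omega(r \log n)$ and $\delta \leq \mu^{2}/n^{\Omega(1)}$, Janson's inequality gives $\Pr[X = 0] \leq \exp\bigl(-\mu^{2}/(2(\mu + \delta))\bigr) \leq \exp(-\omega(r \log n))$ for each fixed $(D', \mathcal{I})$, and a union bound over the $e^{O(r \log n)}$ choices of $(D', \mathcal{I})$ concludes. I expect the principal difficulty to lie in the $\delta_A$ bound, specifically the careful accounting of how shared cross-edges interact with shared $S$-edges when $i = i'$; this is where the degree lower bound $\indeg_S(v) > (\Delta+1)/2$ derived from the density/minimality of $S$ in Definition~\ref{def:good} does the decisive work.
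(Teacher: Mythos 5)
Your setup (Janson over the family of extensions indexed by $(i,\vec v)$, then a union bound over the $e^{O(r\log n)}$ choices of $(D',\mathcal I)$) and your computation of $\mu$ follow the paper's proof; note only that "the exponent of $n$ is nonnegative'' is not a short consequence of the density inequality alone -- it is equivalent to $\sum_j d_j\le s$, and the paper needs both the bound $s\ge\Delta$ and a parity argument (for $s=\Delta$, $\sum_j d_j\equiv \Delta s \pmod 2$ rules out $\sum_j d_j=\Delta+1$) to establish it. The genuine gaps are in the estimate of $\delta$. First, in $\delta_B$ you have not handled the term $F=S$, i.e.\ the pairs with $\vec v=\vec v'$ and $i\ne i'$, which do contribute (they share all of $E(S')$). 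The proper-subgraph density bound you invoke does not apply to $F=S$, where instead $e(S)/(s-2)>(\Delta+1)/2$; this summand is $\Theta\left(n^{-s}q^{-e(S)}\right)$, which for $s=\Delta+1$ and $e(S)=\binom{\Delta+1}{2}$ is of order $n^{-1}\log^{-1}n$ up to the $\omega(1)$ factor in $p$ -- much larger than your claimed $O\left(n^{-2\Delta/(\Delta+1)}\right)$. This is precisely the $K_{\Delta+1}$-factor bottleneck that forces the polylog in the threshold, and the paper devotes a separate case analysis ($s=\Delta$ versus $s\ge\Delta+1$, with the clique case isolated) to it; it cannot be absorbed into the generic proper-subgraph computation.

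Second, your $\delta_A$ argument is asserted rather than carried out, and the tool you identify as decisive is insufficient. Combining $d_j\le\lfloor(\Delta-2)/2\rfloor$ with $e(J)\le\frac{\Delta+1}{2}(v(J)-2)$ gives $e(J)+\sum_{j\in J}d_j\le\frac{2\Delta-1}{2}v(J)-(\Delta+1)$, and the resulting summand $n^{-u}q^{-\left(\frac{2\Delta-1}{2}u-(\Delta+1)\right)}=O\left(n^{\frac{(\Delta-2)u}{\Delta+1}-2}\right)$ \emph{diverges} for large $u$: e.g.\ for $\Delta=5$ and $S$ a $5$-regular graph on $10$ vertices (which satisfies Definition \ref{def:good}(iii)), taking $v(J)=9$ your bound gives $e(J)+\sum_{j\in J}d_j\le 30$ and a summand of order $n^{-9}q^{-30}=O(n)$. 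The two upper bounds cannot be simultaneously tight, and the paper exploits exactly this by applying the density condition to the complement $I=S\setminus J$, which yields the much stronger $e(J)+\sum_{j\in J}d_j<\frac{(\Delta-1)v(J)}{2}$ -- but only when $v(I)\ge 3$, so the cases $v(J)\in\{s-2,s-1\}$ require yet another separate estimate. Finally, the target "$\delta\le\mu^2/n^{\Omega(1)}$'' is not strong enough on its own: since $r$ may be as large as $\alpha n$, you need $\delta=o\left(\mu^2 r^{-1}\log^{-1}n\right)$, so each per-pair overlap factor in the $i\ne i'$ sum must be $o\left(n^{-1}\log^{-1}n\right)$, which is exactly what makes the $F=S$ term critical.
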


In the rest of this section, we prove Lemma \ref{lemma:concentration} and deduce from it Lemma \ref{key lemma}.
\begin{proof}[of Lemma \ref{lemma:concentration}]
Let us fix $D'$ and $\mathcal I$ as in the assumptions of the lemma. Note that $s^2r\leq s^2 t\leq \varepsilon n/2$ and $|D_h|\leq n$. Therefore, there are at most $s^{2}r\binom{n}{n-s^2r}\binom{n}{r}\leq n\exp\left(s^2r\log n+r\log n\right)=\exp\left(\Theta(r\log n)\right)$ ways to choose $D'$ and $\mathcal I$. Since the tail probability we aim to show is $\exp\left(-\omega(r\log n)\right)$, this allows us to take a union bound over $D'$ and $\mathcal I$. Without loss of generality, we can assume $\mathcal I = [r]$.

Next, for $i\in [r]$ and for a copy $S'$ of $S$ with $V(S') = \{v_1, \dots, v_s\}$, we define the graph $S' \oplus i$ with vertex set and edge set
\begin{align*}
V( S' \oplus i) = V(S') \cup \bigcup_{j = 1}^s W_{ij}, \quad \text{and} \quad
E(S' \oplus i) = E(S') \cup \{v_jv \mid j \in [s] \text{ and }v \in W_{ij}\}.
\end{align*}
Furthermore, decreasing the size of $D'$ by at most $s-1$, we can assume that $s$ divides $|D'|$.  Let $D' = V_1\cup \ldots\cup V_s$ be a partition of $D'$ into equal sized sets and define the family of \emph{canonical graphs} $\mathcal{S}$ as
$$ \mathcal{S} := \{ S' = (v_1, \ldots, v_s) :  S' \mbox{ is a copy of } S \mbox{ and } v_j \in V_j \; \text{for all} \; j \in [s] \}$$
and set
$$ \mathcal{S}^+ := \{ S' \oplus i : S' \in \mathcal{S} \; \text{and} \;  i \in [r]\}. $$
Observe that if $G$ contains any graph from $\mathcal{S}^+$, then $G$ contains the desired copy. In order to bound the probability for $G$ not to contain such a graph, we apply Janson's inequality (Theorem \ref{thm:Janson}). Thus, we need to estimate the parameters $\mu$ and $\delta$ appearing in Theorem \ref{thm:Janson}.

For every $S'=(v_1,\ldots,v_s)\in \mathcal S$, we have
\begin{equation}
\Delta s = \sum_{j=1}^{s}(d_j + \indeg(v_j)) =  \sum_{j=1}^{s} d_j + 2e(S).\label{sumdeg}
\end{equation}

Note that each graph $S^+ \in \mathcal{S}^+$ appears in $G_h$ with probability $q^{e(S^{+})}$, where $e(S^+) = e(S) + \sum_{j=1}^{s}|W_{i j}|\le  e(S) + \sum_{j=1}^{s} d_j$. Clearly,
$$\mu = |\mathcal{S}^+|  q^{e(S^+) } = r \left(\frac{|D'|}{s} \right)^s  q^{e(S^+)}.$$

Our goal is to show that $\mu =\omega(r\log n)$ and $\delta = o(\mu^2r^{-1} \log^{-1}n)$. Indeed, assuming so, by Janson's inequality we obtain
\begin{equation}
\P(G \text{ does not contain any } S^+\in \mathcal S^+)\le \exp\left (-\frac{\mu^2}{4(\mu+\delta)}\right )= \exp(-\omega(r\log n))\nonumber
\end{equation}
as desired.

First, we show that $\mu = \omega(r\log n)$. By our choice of $\alpha$ in \eqref{def_alpha}, it follows that $|D'|\ge \varepsilon n/2$. Moreover, as $p=o(1)$, $k$ is a constant, and $(1-q)^{k+1}=1-p$, it follows that $q=\Theta(p)$. Therefore, we have that
$$\mu = \Omega(rn^{s} q^{e(S) + \sum_{j=1}^{s} d_j}) = \omega\left (rn^{s - \frac{2e(S) + 2\sum_{j=1}^s d_j}{\Delta+1}} (\log n)^{2(e(S) +\sum_{j=1}^s d_j)/(\Delta(\Delta+1))}\right ).$$
In order to estimate $\mu$, we show that
\begin{equation}
\Delta(\Delta+1)\le 2e(S) + 2\sum_{j=1}^s d_j\le (\Delta+1)s,\label{bounde}
\end{equation}
from which we conclude that
$$\mu = \omega\left (r\log n\right ),$$ as desired.
By equation \eqref{sumdeg}, proving \eqref{bounde} is equivalent to showing that
\begin{equation}
\Delta(\Delta+1-s)\le \sum_{j=1}^{s} d_j\le s. \label{cond}
\end{equation}
Recall that by property \eqref{removalcondition} we have $$\frac{e(S)}{s-2}>\frac{\Delta+1}{2}.$$
Therefore, combining this with \eqref{sumdeg}, we obtain that $$2e(S)>(\Delta+1)s - 2(\Delta+1) = 2 e(S) +\sum_{j=1}^s d_j + s - 2(\Delta+1),$$
which yields
\begin{equation}
s + \sum_{j=1}^s d_j \le 2\Delta+1.\label{sumg}
\end{equation}

If $s\ge \Delta+1$, then clearly $0 \leq \sum_{j=1}^s d_j \le \Delta < s$, proving \eqref{cond}.\\

If $s = \Delta$, then by \eqref{sumg}, $\sum_{j=1}^s d_j\le \Delta + 1$. From \eqref{sumdeg}, $\sum_{j=1}^s d_j$ has the same parity as $\Delta s = \Delta^{2}$ and therefore it cannot be $\Delta + 1$. Moreover, since $e(S)\le {s\choose 2}$, from \eqref{sumdeg}, we also find that $\sum_{j=1}^s d_j\ge \Delta$. Thus, $\sum_{j=1}^s d_j = \Delta$, proving \eqref{cond}.\\

Finally, we show that $s> \Delta-1$. If $s = 3$, due to the assumptions $\frac{e(S)}{s-2}>\frac{\Delta+1}{2}$ and $\Delta\ge 5$, $e(S)$ has to be at least $4$ which is impossible. Hence, $s\ge 4$. This bound, along with the condition that $\frac{e(S)}{s-2} > \frac{\Delta +1}{2}$, implies that
\begin{eqnarray}
(s-2)(s+2) = s^2 -4 \geq s^2 - s \geq 2 e(S) > (\Delta+1)(s-2)\label{sD}
\end{eqnarray}
This gives that $s+2 > \Delta + 1$, so $s > \Delta - 1$, completing the proof of \eqref{cond} and hence \eqref{bounde}.\\

To prove that $\delta = o(\mu^2 r^{-1}\log^{-1}n)$, we have
$$ \delta = \sum_{i, j \in [r]} \sum_{\substack{S', S'' \in \mathcal{S}\\S' \oplus i \sim S'' \oplus j}} q^{e(S' \oplus i) + e(S'' \oplus j) - e((S' \oplus i) \cap (S'' \oplus j))}. $$
We consider the two cases $i = j$ and $i \neq j$.

First, let us consider the case $S' \oplus i \sim S'' \oplus i$, for some $i \in [r]$ and distinct graphs $S', S'' \in \mathcal{S}$. Let $J := S' \cap S''$ and observe that $1\le v(J) \le s - 1$. Let $\mathcal{J}_1$ be the family consisting of all possible graphs of the form $S' \cap S''$. That is,
$$ \mathcal{J}_1 := \{J = S' \cap S'' : S', S'' \in \mathcal{S} \; \text{and} \; v(J) \in \{1, \ldots, s - 1\}\}. $$

For each $j\in J$, there is a unique $\varphi(j)\in [s]$ such that $j\in V_{\varphi(j)}$. For the rest of this proof of Lemma \ref{lemma:concentration}, we abuse notation and write $d_j$ for $d_{\varphi(j)}$, $W_{i,j}$ for $W_{i, \varphi(j)}$ and $J$ for $J':= \{\varphi(j): j \in J \}$.

We have
$$
	e(S' \oplus i \cap S'' \oplus i) = e(J) + \sum_{j\in J} |W_{i, j}|\le e(J) + \sum_{j\in J} d_j.
$$
With these observations in hand, we can bound the contribution of such pairs to $\delta$ as follows:

\begin{eqnarray}
\delta_1  := \sum_{i \in [r]} \sum_{\substack{S', S'' \in \mathcal{S}\\v(S' \cap S'') \ge 1}}  q^{e(S' \oplus i) + e(S'' \oplus i) - e(S' \oplus i \cap S'' \oplus i)}
 \le \sum_{i \in [r]} \sum_{J \in \mathcal{J}_1} \sum_{\substack{S', S'' \in \mathcal{S}\\S' \cap S'' = J}} q^{2 e({S^+}) - ( e(J) + \sum_{j\in J} d_j)}\nonumber
 \end{eqnarray}
 and so
 \begin{align}
 \begin{split}
\delta_1 &\le  r \sum_{J \in \mathcal{J}_1} \left(\frac{|D'|}{s} \right)^{2(s - v(J))}  q^{2e({S^+}) - ( e(J) + \sum_{j\in J} d_j)} = \frac{\mu^2}{r} \sum_{J \in \mathcal{J}_1}  \left(\frac{|D'|}{s} \right)^{-2 v(J)}  q^{-(e(J) + \sum_{j\in J} d_j)}.\nonumber
\end{split}
\end{align}

Therefore, $$\delta_1\le \delta_{1,1} + \delta_{1,2}$$
where
\begin{eqnarray}
\delta_{1, 1} := \frac{\mu^2}{r} \sum_{J \in \mathcal{J}_1, v(J) \leq s-3}  \left(\frac{|D'|}{s} \right)^{-2 v(J)}  q^{-(e(J) + \sum_{j\in J} d_j)} \label{delta11}
 \end{eqnarray}
and
\begin{eqnarray}
\delta_{1,2}:=\frac{\mu^2}{r} \sum_{J \in \mathcal{J}_1, v(J) \ge s- 2}  \left(\frac{|D'|}{s} \right)^{-2 v(J)}  q^{-(e(J) +  \sum_{j\in J} d_j)}. \label{delta12}
\end{eqnarray}

Let us now show that the power $e(J) + \sum_{j\in J} d_j$ cannot be large in $\delta_{1,1}$. Since $I: = S\setminus J$ is a proper subgraph of $S$, and $v(I) \geq 3$, we have from assumption \eqref{removalcondition} that
\begin{equation}
 \frac{e(I)}{v(I)-2}\le \frac{\Delta+1}{2}<\frac{e(S)}{s-2}.\nonumber
 \end{equation}

 Rearranging, we get
 \begin{eqnarray}
 (\Delta +1)v(I) - 2 e(I) \ge 2(\Delta + 1) >  (\Delta +1)s - 2 e(S) ,\nonumber
 \end{eqnarray}
which gives
\begin{equation}
\Delta v(J)< 2 e(J, I) + 2 e(J)-v(J)\nonumber
\end{equation}
where $e(J, I)$ is the number of edges between $I$ and $J$.

Combining that with
$$\Delta v(J) = 2 e(J) + \sum_{j\in J} d_j +e(J, I),$$
yields
\begin{eqnarray}
\sum_{j\in J} d_j \le  e(J, I)-v(J) - 1\nonumber
\end{eqnarray}
and so
\begin{eqnarray}
e(J) + \sum_{j\in J} d_j \le \frac{2 e(J)+\sum_{j\in J} d_j  +e(J, I)-v(J) - 1}{2} = \frac{(\Delta-1)v(J)-1}{2}< \frac{(\Delta-1)v(J)}{2}.\nonumber
\end{eqnarray}

Plugging this bound into \eqref{delta11}, we obtain
\begin{eqnarray}
\delta_{1,1}   & \le& \frac{\mu^2}{r} \sum_{J \in \mathcal{J}_1, v(J) \leq s- 3}  \left(\frac{|D'|}{s} \right)^{-2 v(J)}  q^{-\frac{\Delta-1}{2} v(J)}.\nonumber
\end{eqnarray}
Note that there are at most $\binom{s}{u} \left( |D'| / s \right)^{u}$ graphs $J \in \mathcal{J}_1$ with $v(J) = u$. Thus, we have
\begin{eqnarray}
\delta_{1,1} \le \frac{\mu^2}{r} \sum_{u = 1}^{s-3} \binom{s}{u} \left( \frac{|D'|}{s} \right)^{-u} q^{-\frac{(\Delta-1)u}{2}} = O(\mu^2 r^{-1} n^{-\frac{2}{\Delta+1}}) = o(\mu^2 r^{-1}\log^{-1}n).\nonumber
\end{eqnarray}

For $\delta_{1,2}$, since $s>v(J)\ge s-2$, there are only two cases, $v(J) = s-2$ and $v(J)=s-1$. Let $I = S\setminus J$ as above. We have $v(I)\le 2$ and so $e(I)\le 1$. Therefore,
$$e(S) = e(J) + e(I) + e(J, I)\le e(J) + e(J, I)+1.$$
Plugging this into the inequality $\frac{e(S)}{s-2}>\frac{\Delta+1}{2}$, we get
$$(\Delta+1)(s-2)\le 2e(S)-1 \le 2e(J)+2e(J, I)+1.$$
Thus,
\begin{eqnarray}
2\left (e(J) + \sum_{j\in J} d_j\right ) = 2\Delta v(J) - 2e(J) - 2 e(J, I) \le 2\Delta v(J) - (\Delta+1)(s-2)+1.\nonumber
\end{eqnarray}
Since we proved in \eqref{sD} that $s\ge \Delta\ge 5$, we have
$$(\Delta-1/2)v(J)\le (\Delta-1/2)(s-1)\le (\Delta+1)(s-2).$$
And so,
$$2\left (e(J) + \sum_{j\in J} d_j\right )\le (\Delta+1/2) v(J)+1.
$$
Plugging this into \eqref{delta12} and estimating the number of graphs $J \in \mathcal{J}_1$ with $v(J) = u$, we get
\begin{eqnarray}
\delta_{1,2} \le \frac{\mu^2}{r} \sum_{u = s-2}^{s-1} \binom{s}{u} \left( \frac{|D'|}{s} \right)^{-u} q^{-\frac{(\Delta+1/2)u}{2}-\frac{1}{2}} = O(\mu^2 r^{-1} n^{-\frac{1/2}{\Delta+1}}) = o(\mu^2 r^{-1}\log^{-1}n).\nonumber
\end{eqnarray}
Therefore, we have
$$
\delta_1 \leq \delta_{1,1} + \delta_{1,2} = o(\mu^2 r^{-1} \log^{-1} n).
$$

Next, if $S' \oplus i \sim S'' \oplus j$ for $i \neq j$ and $S',
S'' \in \mathcal{S}$, then we have $(S' \oplus
i) \cap (S'' \oplus j) = S' \cap S''$. Let $J := S' \cap S''$ and
observe that $e(J) \ge 1$, as otherwise $S' \oplus i$ and $S'' \oplus
j$ would not have any edges in common. Let $\mathcal{J}_2$ be the
family consisting of all possible graphs of the form $S' \cap S''$,
$$ \mathcal{J}_2 := \{J = S' \cap S'' : S', S'' \in \mathcal{S} \; \text{and} \; e(J) \ge 1\}. $$
The contribution of such pairs to $\delta$ is
\begin{equation}
\delta_2 :=  \sum_{i \neq j} \sum_{\substack{S', S'' \in \mathcal{S}\\e(S' \cap S'') \ge 1}}  q^{e(S' \oplus i) + e(S'' \oplus j) - e(S' \cap S'')} =  \sum_{i \neq j} \sum_{J \in \mathcal{J}_2} \sum_{\substack{S', S'' \in \mathcal{S}\\S' \cap S'' = J}}  q^{2 e({S^+})  - e(J)} \nonumber
\end{equation}
and can be bounded by
\begin{equation}
\delta_2 \le r^{2} \sum_{J \in \mathcal{J}_2, 2\le v(J)\le s} \left (\frac{|D'|}{s}\right )^{2(s - v(J))} q^{2 e({S^+})  - e(J)}=\mu^2  \sum_{J \in \mathcal{J}_2, 2\le v(J)\le s} \left (\frac{|D'|}{s}\right )^{-2v(J)} q^{- e(J)}\nonumber
\end{equation}
which can be further split into two terms when $S'\neq S''$ and when $S' = S''$,
\begin{equation}
\delta_2\le \mu^2  \sum_{J \in \mathcal{J}_2, 2\le v(J)<s} \left (\frac{|D'|}{s}\right )^{-2v(J)} q^{- e(J)}  + \mu^2 \sum_{S\in \mathcal S}   \left (\frac{|D'|}{s}\right )^{-2s}q^{ - e(S)} =: \delta_{2,1} + \delta_{2,2}\nonumber.
\end{equation}

For $3\le v(J)<s$ and $J\subset S$ with $S\in \mathcal S$, by property \eqref{removalcondition}, we have
\begin{equation}
e(J) \le \frac{\Delta+1}{2}( v(J)-2) .\nonumber
\end{equation}
When $v(J)=2$, $e(J) = 1$ by the definition of $\mathcal{J}_2$.
Thus,
\begin{eqnarray}
\delta_{2, 1}&=O(\mu ^{2})n^{-2}q^{-1}+O(\mu ^{2})\sum_{u = 3}^{s-1} n^{-u}q^{-\frac{\Delta+1}{2}( u-2)} = o\left (\mu^2 n^{-\frac{2\Delta}{\Delta+1}}\right )=o\left (\mu^2 r^{-1}\log^{-1}n\right ).\nonumber
\end{eqnarray}
For $\delta_{2, 2}$, we have
$$\delta_{2, 2}=O\left ( \mu^2 n^{-s} q^{-e(S)}\right )=o\left (\mu^2 n^{-s + \frac{2e(S)}{\Delta+1}}(\log n)^{-\frac{2e(S)}{\Delta(\Delta+1)}}\right ).$$
Recall that $s\ge \Delta$.
If $s\ge \Delta+1$ then
$$n^{-s + \frac{2e(S)}{\Delta+1}}\le n^{-s+\frac{\Delta s}{\Delta+1}} = n^{-\frac{s}{\Delta+1}}\le n^{-1}\le r^{-1}.$$
In these estimates, unless $s = \Delta+1$ and $e(S) = s\Delta/2$, we have
$$n^{-s + \frac{2e(S)}{\Delta+1}}\le n^{-1-\frac{1}{\Delta+1}}\le r^{-1}\log^{-1}n.$$
and hence
$$\delta_{2, 2} = o\left (\mu^2 r^{-1}\log^{-1}n\right ).$$
In the case that  $s = \Delta+1$ and $e(S) = s\Delta/2$, that is $S$ is a clique on $\Delta+1$ vertices, we still have
\begin{equation}
\delta_{2, 2}= o\left ( \mu^2 n^{-s + \frac{2e(S)}{\Delta+1}}(\log n)^{-\frac{2e(S)}{\Delta(\Delta+1)}}\right ) = o\left (\mu^2 r^{-1}\log^{-1}n\right )\nonumber.
\end{equation}
If $s = \Delta$ then
$$e(S)\le {s\choose 2} = (\Delta-1)\Delta/2.$$
Thus,
$$n^{-s + \frac{2e(S)}{\Delta+1}}\le n^{-\frac{2\Delta}{\Delta+1}}\le n^{-1-\frac{\Delta-1}{\Delta+1}}\le r^{-1}n^{-\frac{\Delta-1}{\Delta+1}}.$$
In all cases, we obtain
$$\delta_2 = o(\mu^2 r^{-1} \log^{-1}n).$$
To sum up, we have
$$\delta \le \delta_1 + \delta_2 = o(\mu^2 r^{-1}\log^{-1}n),$$
completing the proof of the lemma.
\end{proof}

Now we are ready to prove Lemma \ref{key lemma}.
\begin{proof}[of Lemma \ref{key lemma}] All that is left to do is to check that the condition in Theorem \ref{thm:hyper_match} is satisfied whp. By the union bound over $1\le t\le \alpha n$ and $1\le r\le t$, the conclusion of Lemma \ref{lemma:concentration} holds for all $r$ and $t$ simultaneously except on a set of probability at most
$$\sum_{t=1}^{n} \sum _{r=1}^{t} e^{-\omega (r\log n)}\le n^{2-\omega(1)} = o(1).$$
Assume that the conclusion of Lemma \ref{lemma:concentration} holds for all $1\le t\le \alpha n$ and $1\le r\le t$. We want to show that for every $\mathcal{I} \subseteq [t]$, the hypergraph $\bigcup_{i \in \mathcal{I}} L_i$ contains a matching of size greater than $s(|\mathcal I| - 1)$. Let $r= |\mathcal I|$. Let $D' = D_h$. By the property from Lemma \ref{lemma:concentration}, one can find a hyperedge $e_1\in L_{i_1}$ for some $i_1\in \mathcal I$. Redefine $D':=D_h\setminus e_1$ and observe that $|D'|\ge |D_h| - s$. Then, by Lemma \ref{lemma:concentration}, there exists another edge $e_2\in L_{i_2}$ for some $i_2\in \mathcal I$. We repeat this argument $s(r-1)$ times and use the fact that the set $D'$ always has size at least $|D_h| - s^{2}(r-1)\ge |D_h|-s^{2}r$ to obtain the desired result.
\end{proof}

\section{Proof of Theorem \ref{thm:main}}

In this section we prove Theorem \ref{thm:main}.

\begin{proof}[of Theorem \ref{thm:main}] Actually, we have already described more or less the whole proof in Section \ref{sec:emb} but for completeness, we will repeat the main steps of the argument (for more details, the reader can read the first part of Section \ref{sec:emb} again).

Let $H\in \mathcal H((1-\varepsilon)n,\Delta)$ and let $(H',\mathcal S_1,\ldots, \mathcal S_k)$ be an $\alpha$-good partition of $H$ as obtained by Lemma \ref{lemma:Partitioning} with $\alpha$ as given in \eqref{def_alpha} (recall that $k:=k(\Delta,\alpha)$ is a constant). Let us fix $p=\beta(n)(n^{-1}\log^{1/\Delta}n)^{2/(\Delta+1)}$, where $\beta(n)\rightarrow \infty$ arbitrarily slowly whenever $n$ goes to infinity. Note that as ``containing $H$" is a monotone increasing property, it is enough to prove the theorem for this $p$.

Let $q$ be the unique number $0<q<1$ for which
\begin{equation}
  (1-q)^{k+1}=1-p,\nonumber
  \end{equation}
   and observe that as $p=o(1)$ and $k$ is a constant, it follows that $q = (1+o(1)) p/k = \Omega(p)$.

  We embed $H$ in $k+1$ rounds, where in each round $0\leq h\leq k$ we expose the edges of a graph $G_h:=G_{n,q}$, independently of the previous exposures, and extend a current partial embedding $f$ of $H_h:=H'\cup \left(\cup_{S\in \mathcal S_i,i\leq h}S\right)$, using the edges of $G_h$. Clearly, $\cup_{h=0}^{k} G_h$ has the same distribution as $G_{n,p}$. Formally, we can describe the scheme as follows:

  {\bf Round $0$.} Embed $H_0:=H'$ into $G_0$. In order to do so, observe that by property (vi) of the definition of $\alpha$-goodness, using the notation in Theorem \ref{riordan}, we have that
  $$\gamma(H')\leq (\Delta+1)/2.$$
  Therefore, using Theorem \ref{riordan} we find (whp) a copy of $H'$ in $G_0$.

  {\bf Round $h+1$.} Extend the embedding of $H_h$ into an embedding of $H_{h+1}:=H_h\cup \left(\cup_{S\in \mathcal S_{h+1}}S\right)$. This is obtained whp directly from Lemma \ref{key lemma}.

  By the union bound over $0\le h\le k$, the whole scheme succeeds whp.
\end{proof}

\emph{Acknowledgements.}  The authors would like to thank the anonymous referree for the many helpful comments and several insightful suggestions.

\bibliographystyle{abbrv}
%%% !!!
%%% Change this to match the name of your BIB file
\bibliography{hamiltonian}

\begin{thebibliography}{10}

\bibitem{aharoni2000hall}
R.~Aharoni and P.~Haxell.
\newblock Hall's theorem for hypergraphs.
\newblock {\em Journal of Graph Theory}, 35(2):83--88, 2000.

\bibitem{ACKRRS}
N.~Alon, M.~Capalbo, Y.~Kohayakawa, V.~R{\"o}dl, A.~Ruci{\'n}ski, and
  E.~Szemer{\'e}di.
\newblock Universality and tolerance.
\newblock In {\em Foundations of Computer Science, 2000. Proceedings. 41st
  Annual Symposium on}, pages 14--21. IEEE, 2000.

\bibitem{AF}
N.~Alon and Z.~F{\"u}redi.
\newblock Spanning subgraphs of random graphs.
\newblock {\em Graphs and Combinatorics}, 8(1):91--94, 1992.

\bibitem{alon2004probabilistic}
N.~Alon and J.~H. Spencer.
\newblock {\em The Probabilistic Method}.
\newblock John Wiley \& Sons, 2004.

\bibitem{bollobas1998random}
B.~Bollob{\'a}s.
\newblock {\em Random graphs}.
\newblock Springer, 1998.

\bibitem{CFNS}
D.~Conlon, A.~Ferber, R.~Nenadov, and N.~{\v{S}}kori{\'c}.
\newblock Almost-spanning universality in random graphs.
\newblock {\em Random Structures \& Algorithms, arXiv preprint
  arXiv:1503.05612v2}.

\bibitem{dellamonica2008universality}
D.~Dellamonica~Jr, Y.~Kohayakawa, V.~R{\"o}dl, and A.~Ruci{\'n}ski.
\newblock Universality of random graphs.
\newblock In {\em Proceedings of the nineteenth annual ACM-SIAM symposium on
  Discrete algorithms}, pages 782--788. Society for Industrial and Applied
  Mathematics, 2008.

\bibitem{ErdosRenyi}
P.~Erd{\H o}s and A.~R{\'e}nyi.
\newblock On the evolution of random graphs.
\newblock {\em Magyar Tud. Akad. Mat. Kutat{\'o} Int. K{\"o}zl.}, 5:17--61,
  1960.

\bibitem{ferber2016optimal}
A.~Ferber, G.~Kronenberg, and K.~Luh.
\newblock Optimal threshold for a random graph to be 2-universal.
\newblock {\em arXiv preprint arXiv:1612.06026}, 2016.

\bibitem{FNP}
A.~Ferber, R.~Nenadov, and U.~Peter.
\newblock Universality of random graphs and rainbow embedding.
\newblock {\em to appear in Random Structure \& Algorithms, arXiv preprint
  arXiv:1311.7063}.

\bibitem{johansson2008factors}
A.~Johansson, J.~Kahn, and V.~Vu.
\newblock Factors in random graphs.
\newblock {\em Random Structures \& Algorithms}, 33(1):1--28, 2008.

\bibitem{KK}
J.~Kahn and G.~Kalai.
\newblock Thresholds and expectation thresholds.
\newblock {\em Combinatorics, Probability and Computing}, 16(03):495--502,
  2007.

\bibitem{montgomery2014embedding}
R.~Montgomery.
\newblock Embedding bounded degree spanning trees in random graphs.
\newblock {\em arXiv preprint arXiv:1405.6559}, 2014.

\bibitem{posa}
L.~P{\'o}sa.
\newblock Hamiltonian circuits in random graphs.
\newblock {\em Discrete Mathematics}, 14(4):359--364, 1976.

\bibitem{riordan2000spanning}
O.~Riordan.
\newblock Spanning subgraphs of random graphs.
\newblock {\em Combinatorics, Probability and Computing}, 9(02):125--148, 2000.

\end{thebibliography}

\end{document}